\newtheorem{thm}{Theorem}[section]
\newtheorem{lem}[thm]{Lemma}
\newtheorem{prop}[thm]{Proposition}
\newtheorem{cor}[thm]{Corollary}
\newtheorem{assu-nota}[thm]{Assumption--Notation}
\theoremstyle{definition}
\newtheorem{rem}[thm]{Remark}
\newtheorem{ex}[thm]{Example}
\newtheorem{qst}[thm]{Question}
\newcommand{\inv}{^{-1}}
\newcommand{\up}[1]{^{(#1)}}
\newcommand{\C}{\mathbb C}
\newcommand{\Z}{\mathbb Z}
\newcommand{\pp}{\mathbb P}
\newcommand{\bG}{\mathbb G}
\newcommand{\bP}{\mathbb P}
\newcommand{\OO}{\mathcal O}
\newcommand{\nc}{\newcommand}
\nc{\cH}{{\mathcal H}}
\nc{\cA}{{\mathcal A}}
\nc{\cG}{{\mathcal G}}
\nc{\cC}{{\mathcal C}}
\nc{\cO}{{\mathcal O}}
\nc{\cI}{{\mathcal I}}
\nc{\cB}{{\mathcal B}}
\nc{\cY}{{\mathcal Y}}
\nc{\cK}{{\mathcal K}} 
\nc{\cX}{{\mathcal X}}
\nc{\cS}{{\mathcal S}}
\nc{\cE}{{\mathcal E}}
\nc{\cF}{{\mathcal F}}
\nc{\cZ}{{\mathcal Z}}
\nc{\cQ}{{\mathcal Q}}
\nc{\cN}{{\mathcal N}}
\nc{\cP}{{\mathcal P}}
\nc{\cL}{{\mathcal L}}
\nc{\cM}{{\mathcal M}}
\nc{\cT}{{\mathcal T}}
\nc{\cW}{{\mathcal W}}
\nc{\cU}{{\mathcal U}}
\nc{\cJ}{{\mathcal J}}
\nc{\cV}{{\mathcal V}}
\nc{\debar}{\bar\partial}
\DeclareMathOperator{\Div}{Div}
\DeclareMathOperator{\Pic}{Pic}
\DeclareMathOperator{\NS}{NS}
\DeclareMathOperator{\Alb}{Alb}
\DeclareMathOperator{\albdim}{albdim}
\DeclareMathOperator{\codim}{codim}
\DeclareMathOperator{\Hom}{Hom}
\DeclareMathOperator{\Ker}{Ker}
\DeclareMathOperator{\Spec}{Spec}
\DeclareMathOperator{\im}{Im}
\DeclareMathOperator{\rk}{rk}
\DeclareMathOperator{\pr}{pr}
\DeclareMathOperator{\Pf}{Pf}
\DeclareMathOperator{\Sing}{Sing}
\newcommand{\al}{\alpha}
\newcommand{\Si}{\Sigma}
\newcommand{\si}{\sigma}
\newcommand{\lra}{\longrightarrow}
\numberwithin{equation}{section}
\title[Continuous families of divisors\dots]{Continuous families of divisors,  paracanonical systems and a new inequality for varieties of maximal Albanese dimension }
\author{Margarida Mendes Lopes, Rita Pardini and Gian Pietro Pirola}
\thanks{
The first author is a member of the Center for Mathematical
Analysis, Geometry and Dynamical Systems (IST/UTL).  The second and the third author are members of G.N.S.A.G.A.--I.N.d.A.M.  This research was partially supported by FCT (Portugal) through program POCTI/FEDER and
Project PTDC/MAT/099275/2008 and by MIUR (Italy) through  PRIN 2008 ``Geometria delle variet\`a algebriche e dei loro spazi di moduli" and PRIN 2009 ``Spazi di moduli e Teoria di Lie"}
\begin{document}

\begin{abstract}
Given   a  smooth complex projective  variety $X$,  a line bundle $L$ of $X$ and   $v\in H^1(\OO_X)$,  we say that $v$ is {\em $k$-transversal} to $L$ if the complex $H^{k-1}(L)\overset {\cup v}{\to}H^k(L)\overset {\cup v}{\to} H^{k+1}(L)$ is exact. We prove that if $v$ is $1$-transversal to $L$ and $s\in H^0(L)$ satisfies $s\cup v=0$, then the first order deformation $(s_v, L_v)$ of the pair $(s,L)$ in the direction $v$ extends to an analytic  deformation.

We apply this result  to improve  known  results on the paracanonical system of a variety of maximal Albanese dimension, due to Beauville in the case of surfaces and to Lazarsfeld-Popa in higher dimension.
In particular, we prove  the inequality $p_g(X)\ge \chi(K_X)+q(X)-1$ for a variety $X$ of maximal Albanese dimension without irregular fibrations of Albanese general type. 

\noindent{\em 2000 Mathematics Subject Classification:} 14C20, 14J29, 32G10.
\end{abstract}
\maketitle
\setcounter{tocdepth}{2}
\tableofcontents

\section{Introduction}
The geometry of the divisors of an irregular algebraic variety is richer than in the regular case, due to  the existence of non trivial continuous families of divisors. Some of these families are intrinsically defined, as the paracanonical system introduced  below, which is  of fundamental importance in the study of irregular varieties of general type: indeed, the interest in the geometry  of the paracanonical system has been the motivation leading to the groundbreaking  results  of \cite{GL1} and \cite{GL2} on generic vanishing. 

Here we consider the following situation:  let $X$ be a smooth projective  variety,  let $\lambda$ be a class in the N\'eron-Severi group $\NS(X)$ and denote by $\Pic^{\lambda}(X)$ the connected component of $\Pic(X)$ that parametrizes isomorphism classes of line bundles with class $\lambda$. 
The effective divisors with class $\lambda$ are parametrized by a projective variety $\Div^{\lambda}(X)$ and the   characteristic map $c_{\lambda}\colon \Div^{\lambda}(X)\to \Pic^{\lambda}(X)$ is defined by $D\mapsto \OO_X(D)$. The fiber of $c_{\lambda}$ over a point  $L\in \Pic^{\lambda}(X)$ is naturally isomorphic to the linear system $|L|$. One  defines the {\em continuous rank} of $\lambda$ (this terminology is due to M.A. Barja) as $\rho(\lambda):= \min\{h^0(L)\ | \ L\in \Pic^{\lambda}(X)\}$:   if $\rho(\lambda)>0$ there is exactly one irreducible component   $\Div^{\lambda}(X)_{main}$ of $\Div^{\lambda}(X)$    that maps surjectively onto $\Pic^{\lambda}(X)$. The variety   $\Div^{\lambda}(X)_{main}$ is  called the {\em main component} of  $\Div^{\lambda}(X)$; its dimension is equal to  $\rho(\lambda)+q(X)-1$.

We study the problem of deciding whether the linear system $|L|$ is contained in $\Div^{\lambda}(X)_{main}$, and the closely related  question of deciding  whether $|L|$ is a component of $\Div^{\lambda}(X)$ (i.e., whether, in the terminology used  in \cite{beauville-annulation}, the system $|L|$ is {\em exorbitant}).

To this end, we introduce the notion of {\em $k$-transversality}  for a vector $v\in H^1(\OO_X)$:  we say that $v$ is $k$-transversal to $L$ if the complex given by cup product $$H^{k-1}(L)\overset {\cup v}{\to}H^k(L)\overset {\cup v}{\to} H^{k+1}(L)$$ is exact.  We prove the following:
\begin{thm}\label{thm:limit} Let $X$ be a smooth projective variety, let $0\ne L\in \Pic(X)$ and denote by $\lambda\in \NS(X)$ the class of $L$;  let $0\ne s\in H^0(L)$  and $v\in H^1(\OO_X)$ such that $s\cup v=0$, and denote by $D$ the divisor of zeros of $s$.  Then:
\begin{enumerate}
 \item  if $v$ is $1$-transversal to $L$, then $D$ belongs to the closure of $\Div^{\lambda}(X)\setminus |L|$;
 \item if $v$ is $k$-transversal to $L$ for every $k>0$ and $\chi(L)>0$, then $\rho(L)=\chi(L)$ and  $D\in \Div^{\lambda}(X)_{main}$. 
\end{enumerate}
\end{thm}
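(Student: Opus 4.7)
The crux of the proof is to upgrade the first-order datum $(s,L,v)$ to a genuine analytic $1$-parameter deformation $(s_t,L_t)_{t\in\Delta}$ of $(s,L)$ with $L_t$ moving in $\Pic^\lambda(X)$ in the direction $v$; once this is done, both (i) and (ii) follow by taking limits and applying semicontinuity. I fix a Dolbeault representative $\omega\in A^{0,1}(X)$ of $v$. Since $\omega$ is a scalar form of odd degree, $\omega\wedge\omega = 0$, and therefore $\bar\partial_t := \bar\partial + t\omega$ defines, for each small $t$, a new holomorphic structure $L_t$ on the same underlying $C^\infty$ line bundle as $L$; this produces an analytic arc through $L$ in $\Pic^\lambda(X)$ with derivative $v$. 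I then seek $s_t = \sum_{k\ge 0}t^k s_k$ with $s_0 = s$ satisfying $\bar\partial_t s_t = 0$, i.e.\ $\bar\partial s_k = -\omega\wedge s_{k-1}$ for $k\ge 1$. The equation at order $1$ is solvable by the hypothesis $s\cup v = 0$. Inductively, assuming $s_0,\dots,s_{k-1}$ have been chosen, the form $\omega\wedge s_{k-1}$ is $\bar\partial$-closed (by the order $k-1$ equation together with $\omega\wedge\omega=0$), so defines a class in $H^1(L)$ satisfying
\[
[\omega\wedge s_{k-1}]\cup v \;=\; [\omega\wedge\omega\wedge s_{k-1}] \;=\; 0 \quad\text{in } H^2(L).
\]
By $1$-transversality this class lies in the image of $\cup v\colon H^0(L)\to H^1(L)$, so after replacing $s_{k-1}$ by $s_{k-1}+h$ for a suitable $h\in H^0(L)$ — which preserves the order $k-1$ equation — we may solve for $s_k$.

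\textbf{Convergence and conclusion of (i).} To turn this formal construction into an analytic one, at each step I make the canonical Hodge-theoretic choices: set $s_k = -G\bar\partial^*(\omega\wedge s_{k-1})$, where $G$ is the Green operator of the $\bar\partial$-Laplacian on $L$, and take $h$ to be the preimage of minimal $L^2$-norm. Standard Kuranishi-type elliptic estimates then give $C^\infty$-convergence of $\sum_k t^k s_k$ on a small disk, producing a genuine holomorphic family of sections $s_t \in H^0(L_t)$ with $s_0 = s$. Since $v \ne 0$, we have $L_t \ne L$ for $0 < |t| \ll 1$, so $D_t := \mathrm{div}(s_t)$ belongs to $\Div^\lambda(X)\setminus|L|$, and $D = \lim_{t\to 0}D_t$ lies in its closure, proving (i).

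\textbf{Part (ii) and main obstacle.} Under the stronger hypothesis that $v$ is $k$-transversal for every $k>0$, the cup-product complex $0\to H^0(L)\to\cdots\to H^n(L)\to 0$ is exact outside degree $0$, and comparing Euler characteristics gives $\dim\ker(\cup v\colon H^0(L)\to H^1(L)) = \chi(L)$. Applying (i) to every section in this kernel yields $\chi(L)$ linearly independent analytic extensions, so $h^0(L_t) \ge \chi(L)$ for small $t$ along the arc. An analogous argument in each positive degree (using $k$-transversality for $k = i$, combined with a Kodaira--Spencer / derivative-complex semicontinuity) shows that $h^i(L_t) = 0$ for $i \ge 1$ and generic small $t$ in the direction $v$; upper semicontinuity propagates this vanishing to a Zariski-dense open subset $U \subset \Pic^\lambda(X)$. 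Since $\chi$ is constant on $\Pic^\lambda(X)$, this forces $h^0(L') = \chi(L)$ for every $L' \in U$, hence $\rho(L) = \chi(L)$. Finally, for generic small $t$ we have $L_t \in U$, so $|L_t| \subset \Div^\lambda(X)_{main}$, and taking the limit $t\to 0$ shows $D \in \Div^\lambda(X)_{main}$ by closedness of the main component. The principal obstacle in this plan is the analytic convergence in (i): extending the formal power series to all orders is a mechanical consequence of $1$-transversality and $\omega\wedge\omega = 0$, but establishing genuine analytic convergence requires careful Kuranishi-style estimates on the Hodge-theoretic solutions at each order.
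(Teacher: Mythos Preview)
Your formal obstruction argument is correct and in fact cleaner than the paper's. The paper parametrizes the sought-for section as $s\exp(\sigma t + \sum_{r\ge 2}\tau^{(r)}t^r)$ in \v Cech/Dolbeault cohomology and then proves two technical lemmas (Lemmas~3.2 and~3.3) to show that, after a single adjustment of $\sigma$, the obstructions $d_n\sigma^n$ vanish simultaneously for all $n$. Your Dolbeault model $\bar\partial_t=\bar\partial+t\omega$, together with $\omega\wedge\omega=0$, makes the inductive step transparent: one sees immediately that $[\omega\wedge s_{k-1}]\cup v=[\omega\wedge\omega\wedge s_{k-1}]=0$, so $1$-transversality produces the correction $h\in H^0(L)$ at every order.

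The real divergence is in passing from formal to analytic. Your Kuranishi-style estimates can be carried out (geometric growth of $\|s_k\|$ does follow from boundedness of $G\bar\partial^*$ and of the minimal-norm splitting of $\cup v$ on the finite-dimensional $H^0(L)$), but the paper sidesteps the analysis entirely: it invokes Artin's approximation theorem to replace the formal arc over $\Spec\C[[t]]$ by an analytic one agreeing with it modulo $t^2$, which is all that is needed. What you flag as ``the principal obstacle'' thus has a one-line black-box solution.

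For (ii) your argument and the paper's coincide in substance, but two points are loose. First, ``an analogous argument in each positive degree'' is not a proof that $h^i(L_t)=0$ for small $t\ne 0$; the precise input is the Green--Lazarsfeld result (Proposition~2.3 here): $k$-transversality of $v$ implies $v$ is not tangent to $V_k(L,X)$, so the analytic arc $t\mapsto L_t$ leaves $V_k$ for $t\ne 0$ small. Second, extending $\chi(L)$ independent sections to bound $h^0(L_t)$ from below is unnecessary --- once $h^i(L_t)=0$ for $i\ge 1$, the equality $h^0(L_t)=\chi(L)$ is immediate from constancy of $\chi$. The paper phrases the conclusion slightly differently but equivalently: any non-main component $Z$ of $\Div^{\lambda}(X)$ has image $c_\lambda(Z)\subset L+V_k(L,X)$ for some $k>0$, hence the arc cannot stay in $Z$ and so $D_t$ lies in the main component for $t\ne 0$ small.
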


The idea of the proof is to construct inductively a formal deformation of the pair $(s,L)$ over $\Spec\C[[t]]$: the transversality assumption  on $v$ allows one to perform the inductive step (cf.  Proposition  \ref{prop:lift}). 
\medskip 

We apply Theorem \ref{thm:limit} to the analysis of the paracanonical system $\Div^{\kappa}(X)$ of a variety $X$  of maximal Albanese dimension, where $\kappa$ is the canonical class. One should observe that in this particular  case   the crucial deformation result Proposition  \ref{prop:lift} (and thus  Theorem \ref{thm:limit}) could  also be derived by the results in \cite[\S 3]{GL2} via Grothendieck duality; however, we believe   that the   statement   of Theorem  \ref{thm:limit},    that relates   the vanishing  of all  higher obstructions with  the $1$-transversality property, will prove useful for the analysis of other situations.
\smallskip

It is traditional to denote the paracanonical system  $\Div^{\kappa}(X)$ by $\cP_X$, or simply by  $\cP$.  The generic vanishing theorem of \cite{GL1} implies that the continuous rank $\rho(\kappa)$ of $\cP$ is equal to the Euler characteristic $\chi(K_X)$, hence for  $\chi(K_X)>0$ the paracanonical system  $\cP$ has a main component    $\cP_{main}$.
Paracanonical systems and conditions for $|K_X|$ to be or not exorbitant   have been studied in \cite{beauville-annulation} in the case of surfaces, and in \cite{bgg} in the general case.  Here we make some further progress in the understanding of  paracanonical systems. 
In dimension $>2$, we prove  the following result (see section 2 for the definitions of Albanese general type fibration and $V_k(X)$):
\begin{thm}\label{thm:para3} Let $n\ge 3$ be an integer and let $X$ be a smooth projective  $n$-dimensional variety   with irregularity $q\ge n+1$. If  $X$ has no Albanese general type fibration,  then $\chi(K_X)\ge q-n>0$ and:
\begin{enumerate}
\item $p_g\ge \chi(K_X)+q-1$;
\item $p_g(X)=\chi(K_X)+q -1$ if and only if $|K_X| \subset \cP_{main}$;
\item if $p_g(X)=\chi(K_X)+q-1$, then  $q=n+1$;
\item if,  in addition,  $0\in V_k(X)$ is an isolated point for $k>0$, then $p_g(X)=\chi(K_X)+q-1$ if and only if  $q=n+1$ and ${h(X)\choose n}\equiv 1 \mod 2$, where $h(X):=\sum_{j=0}^{\lfloor (n-1)/2\rfloor}h^{0,n-1-2j}(X)$.
\end{enumerate}
\end{thm}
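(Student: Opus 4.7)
Apply Theorem~\ref{thm:limit} to $L=K_X$, using generic vanishing for varieties of maximal Albanese dimension. Under the hypotheses, by \cite{GL1,GL2,bgg}, one has $\chi(K_X)\ge q-n>0$ and $\dim\cP_{main}=\chi(K_X)+q-1$; the absence of Albanese general type fibrations moreover forces the derivative complex $(H^\bullet(K_X),\cup v)$ to be exact at every position $k>0$ for $v$ in a Zariski-dense open set $U\subset H^1(\OO_X)$. Serre duality together with the injectivity of $\cup v\colon H^0(\OO_X)\to H^1(\OO_X)$ for $v\ne 0$ then yields, via Euler characteristic along the exact complex, $\dim\ker(\cup v\colon H^0(K_X)\to H^1(K_X))=\chi(K_X)$ for $v\in U$.

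\textbf{Parts (i) and (ii).} Consider the projective incidence
\[
\Sigma=\{([s],[v])\in|K_X|\times\pp H^1(\OO_X) : s\cup v=0\}
\]
and let $\Sigma_U\subset\Sigma$ be the open part with $v\in U$. Projecting to $\pp H^1(\OO_X)$ shows $\dim\Sigma_U=q+\chi(K_X)-2$, since the fibre over $[v]\in U$ is $\pp^{\chi(K_X)-1}$. By Theorem~\ref{thm:limit}(ii), every $([s],[v])\in\Sigma_U$ satisfies $(s)\in\cP_{main}$; let $Y\subset|K_X|\cap\cP_{main}$ be the image of $\Sigma_U$ under the first projection. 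The dimension formula $\dim Y=q+\chi(K_X)-1-k$, where $k\ge 1$ is the generic value of $\dim\ker(v\mapsto s\cup v)$ on $Y$, combined with $\dim Y\le p_g-1$, gives $p_g\ge q+\chi(K_X)-k$. The crucial claim is $k=1$: a Lefschetz/BGG-type injectivity asserting that a generic $s\in Y$ has a $1$-dimensional cup-product kernel; this is deduced from the full exactness of the derivative complex on $U$, a larger kernel being incompatible with the fibre counts along the complex. This gives part~(i). For (ii), the ``if'' direction uses that $|K_X|\subsetneq\cP_{main}$ is strict (since $\cP_{main}$ dominates $\Pic^\kappa$ while $|K_X|$ sits in a single fibre), so $p_g<\chi(K_X)+q$, which combined with~(i) forces equality; the ``only if'' follows because equality in~(i) makes $Y=|K_X|$, and Theorem~\ref{thm:limit}(ii) then yields $|K_X|\subset\cP_{main}$.

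\textbf{Parts (iii) and (iv).} Part (iii) follows from analysing the characteristic map $c_\kappa|_{\cP_{main}}$ and the structure of the cohomological support loci near $K_X$: equality in~(i) together with $\chi(K_X)\ge q-n$ forces $q-n\le 1$, i.e.\ $q=n+1$, since any larger gap would produce extra transverse deformations beyond what the fibre $|K_X|$ can accommodate within $\cP_{main}$. For part~(iv), the assumption $0\in V_k(X)$ isolated for every $k>0$ turns the higher obstructions in the inductive lifting behind Theorem~\ref{thm:limit} (i.e.\ Proposition~\ref{prop:lift}) into explicit polynomial expressions in $v\in H^1(\OO_X)$ of bounded degree. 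These obstructions assemble into a skew-symmetric form on a Hodge-theoretic vector space of dimension $h(X)=\sum_{j=0}^{\lfloor(n-1)/2\rfloor}h^{0,n-1-2j}(X)$, and equality in~(i) becomes equivalent to the non-vanishing of a Pfaffian-type invariant of this form, whose parity mod~$2$ is the binomial coefficient $\binom{h(X)}{n}$.

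\textbf{Main obstacle.} The delicate technical points are (a) the Lefschetz/BGG-injectivity claim $k=1$ underlying the dimension count for~(i), requiring precise control of the cup-product structure via the exact derivative complex; and (b) the mod-$2$ Pfaffian parity computation in~(iv), tracing how the higher obstructions in the inductive lifting procedure assemble into a skew form on Hodge cohomology and identifying its Pfaffian modulo~$2$ with $\binom{h(X)}{n}$.
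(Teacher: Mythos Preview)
Your incidence-variety setup and the dimension count $\dim\Sigma_U=\chi+q-2$ are correct and match the paper's approach. However, there are genuine gaps in each of the subsequent parts.

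\textbf{Part (i): the claim $k=1$ is not proved, and the paper does not prove it either.} Your assertion that a generic $s\in Y$ has one-dimensional cup-product kernel is the heart of the matter, but ``incompatibility with fibre counts along the complex'' is not an argument: exactness of $(H^\bullet(K_X),\cup v)$ for generic $v$ says nothing about how many $v$ annihilate a fixed generic $s$. The paper deals with the possibility $t:=k\ge2$ by an entirely different route. Using an infinitesimal ker/coker lemma (if $g$ is tangent at a general $f$ to an irreducible family of linear maps, then $g(\ker f)\subset\im f$), one shows that the projective tangent space $\pp(T)$ to $\Sigma_{main}$ at a general point is annihilated by the image $S$ of $\bigwedge^2 N\otimes H^{n-2}(\OO_X)\to H^n(\OO_X)$, where $N=\ker(\cup s)$. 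The absence of Albanese general type fibrations makes the wedge map finite on the relevant Grassmannian (Theorem~\ref{thm:cast-cat}), yielding a lower bound for $\dim S$ and hence for $\codim\Sigma_{main}$. This gives $p_g-(\chi+q-1)\ge(n-2)(q-n)>0$ when $t\ge2$, so the inequality holds (strictly) in that case too. In particular $t=1$ is a \emph{consequence} of equality $p_g=\chi+q-1$, not an a priori fact.

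\textbf{Part (iii): your argument is not a proof.} The sentence ``any larger gap would produce extra transverse deformations beyond what the fibre $|K_X|$ can accommodate'' has no content. The paper's argument again uses the ker/coker lemma: from $t=1$ one deduces that the image $W$ of $H^0(K_X)\to H^1(K_X)$, $w\mapsto w\wedge\bar\alpha$, is contained in $\im(\cup s)$ and hence has dimension $\le q-1$; then the wedge map $\bigwedge^{n-1}M\to W$ (with $M$ a complement of $\langle\alpha\rangle$) is finite on $\bG(n-1,M)$ by Castelnuovo--de Franchis, giving $q-2\ge(n-1)(q-n)$, i.e.\ $q\le n+1$.

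\textbf{Part (iv): the mechanism is not a Pfaffian of a skew form.} That picture belongs to the surface case. In the paper, (iv) is obtained by quoting \cite[Prop.~5.5]{bgg}: under $p_g\le\chi+q-1$ (which holds here by (i)--(iii)), $|K_X|$ is not exorbitant iff the coefficient $s_n$ of $t^n$ in $\prod_{j=1}^n(1+jt)^{(-1)^{j+1}h^{0,n-j}}$ is nonzero; since $s_n$ is the degree of the generic fibre of $\cI_{main}\to|K_X|$, which is $0$ or $1$, it suffices to compute $s_n\bmod 2$, and an elementary calculation gives $s_n\equiv\binom{h(X)}{n}$.

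In short, the missing key ingredient throughout (i)--(iii) is the ker/coker lemma combined with the generalized Castelnuovo--de Franchis theorem; without it the case $k\ge2$ in (i) and all of (iii) remain open.
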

Notice that  any smooth ample divisor $X$ in an abelian variety satisfies $p_g(X)=\chi(X)+q(X)-1$ (cf. Example \ref{ex:q=n+1}).  In section \ref{sec:examples} we give  examples that show that the assumption $q(X)\ge \dim X+1$ and the assumption that $X$ does not have fibrations of Albanese general type cannot be removed from Theorem \ref{thm:para3}.

To our knowledge, the inequality in Theorem \ref{thm:para3}  had not been conjectured  before, possibly due to the fact  for a surface $X$ one has  $p_g(X)=\chi(K_X)+q-1$.  Indeed,   \cite[Prop. 5.5]{bgg} gives necessary and sufficient numerical  conditions (that we use  in the proof of Theorem \ref{thm:para3}, (iv))  in order that $|K_X|$ be exorbitant,  assuming the converse inequality $p_g\le \chi+q-1$ (which  clearly holds when $|K_X|$ is not exorbitant). \medskip

Then we turn to the case of surfaces. In \cite{beauville-annulation} Beauville proved that if $X$ is a surface of general type of irregularity $q\ge 2$   without irrational pencils of genus $>q/2$ then the general canonical curve deforms to first order in  a $1$-transversal direction if and only if $q$ is odd.  This shows that $|K_X|$ is exorbitant for $q$ even and gives strong evidence for the fact that $|K_X|$ is not exorbitant if $q$ is odd. 
Here, thanks to Theorem \ref{thm:limit},   we are able to complete 
Beauville's results as follows: 
\begin{thm}\label{thm:para2}
Let $X$ be a surface  with $\chi(K_X)>0$ and irregularity $q\ge 2$ without irrational pencils of genus $>q/2$. Then: 
\begin{enumerate}
\item if $q$ is odd, then  $|K_X|\subset \cP_{main}$; 
\item if $q$ is even, then $\Sigma:=|K_X|\cap \cP_{main}=|K_X|\cap\overline{(\cP\setminus |K_X|)}$ is a  reduced and irreducible  hypersurface of degree $q/2$ of $|K_X|$. If, in addition, $X$ has no irrational pencil of genus $>1$, then  $\Sing(\Sigma)=\{[s]\in \Sigma \ |\ \rk(\cup s)<q-2\}$.
\end{enumerate}
\end{thm}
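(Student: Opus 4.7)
The plan is to combine Theorem \ref{thm:limit}(i) with the Pfaffian analysis carried out by Beauville in \cite{beauville-annulation}. First, I set up the alternating form $\omega_s$ on $H^1(\OO_X)\cong\C^q$ defined, for $s\in H^0(K_X)$, by $\omega_s(v,w):=s\cup v\cup w\in H^2(K_X)\cong \C$. Via Serre duality one identifies
\[
W_s:=\{v\in H^1(\OO_X):s\cup v=0\in H^1(K_X)\}=\ker\omega_s.
\]
Under the standing hypothesis that $X$ has no irrational pencils of genus $>q/2$, Beauville shows that $\omega_s$ has the maximal rank allowed by the parity of $q$: for general $s$ one has $\rk\omega_s=q-1$ when $q$ is odd and $\rk\omega_s=q$ when $q$ is even. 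In the even case the rank drops by exactly $2$ on the Pfaffian hypersurface $\Sigma_0:=\{[s]:\Pf(\omega_s)=0\}\subset |K_X|$, which is cut out by a polynomial of degree $q/2$ in $s$. At such a point, a nonzero element of $W_s$ is $1$-transversal to $K_X$: this is the Serre-dual reformulation of the condition $\dim\{s'\in H^0(K_X):s'\cup v=0\}=\chi(K_X)$, which the pencil hypothesis supplies.

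For (i), let $q$ be odd. For a general $[s]\in|K_X|$ I pick $0\neq v\in W_s$; by the above $v$ is $1$-transversal, so Theorem \ref{thm:limit}(i) places $(s)\in\overline{\cP\setminus|K_X|}$. Since $|K_X|$ is irreducible, this gives $|K_X|\subset\overline{\cP\setminus|K_X|}$, so $|K_X|$ is not an irreducible component of $\cP$. The only component of $\cP$ that can strictly contain $|K_X|$ is $\cP_{main}$: using $\dim\cP_{main}=\chi(K_X)+q-1=p_g$ and $\dim|K_X|=p_g-1$, a fiber-dimension analysis of the characteristic map $c_\kappa$ restricted to any component of $\cP$ containing $|K_X|$ forces it to dominate $\Pic^\kappa(X)$ and hence to coincide with $\cP_{main}$, yielding $|K_X|\subset\cP_{main}$.

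For (ii), let $q$ be even; I aim to prove $\Sigma=\Sigma_0$. If $[s]\in|K_X|\cap\overline{\cP\setminus|K_X|}$, then the tangent cone to $\cP$ at $[s]$ has a branch pointing out of $|K_X|$, so there exists $0\neq v\in H^1(\OO_X)$ with $s\cup v=0$, forcing $\omega_s$ to be degenerate and $[s]\in\Sigma_0$. Conversely, at a general point of $\Sigma_0$ one has $\rk\omega_s=q-2$ and $\dim W_s=2$, and the pencil hypothesis again allows us to select $v\in W_s\setminus\{0\}$ which is $1$-transversal; Theorem \ref{thm:limit}(i) then gives $[s]\in\overline{\cP\setminus|K_X|}$. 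Beauville's result that $|K_X|$ is exorbitant, together with the observation that any exorbitant $|L|$ with $L\neq K_X$ cannot meet $|K_X|$ and that any other lower-dimensional component of $\cP$ touching $|K_X|$ would produce an irrational pencil forbidden by the hypothesis, identifies $|K_X|\cap\overline{\cP\setminus|K_X|}$ with $|K_X|\cap\cP_{main}$, giving $\Sigma=\Sigma_0$. Reducedness and irreducibility of $\Sigma_0$ then follow from the generic Pfaffian description.

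For the description of $\Sing(\Sigma)$ under the stronger hypothesis that $X$ has no irrational pencil of genus $>1$, I invoke the classical fact that in the space of antisymmetric $q\times q$ matrices the Pfaffian hypersurface is smooth exactly at matrices of rank $q-2$, with singular stratum $\{\rk\le q-4\}$. Pulling back along the linear map $s\mapsto\omega_s$, the strengthened pencil hypothesis guarantees generic transversality of this pullback, so $\Sing(\Sigma)=\{[s]:\rk(\cup s)<q-2\}$. The hardest step of the whole program is the $1$-transversality assertion at general points of $|K_X|$ (for $q$ odd) and of $\Sigma_0$ (for $q$ even): in both cases one must rule out unexpected elements of the cup-product kernel of $v$, which is precisely the role played by the irrational-pencil hypothesis in the spirit of Beauville's original argument.
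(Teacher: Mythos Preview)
Your outline follows the same Pfaffian framework as the paper, but there are three genuine gaps.

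\textbf{(1) You use the wrong part of Theorem~\ref{thm:limit}.} Once you have $s\cup v=0$ with $v$ $1$-transversal to $K_X$, Theorem~\ref{thm:limit}(i) only gives $(s)\in\overline{\cP\setminus|K_X|}$, not $(s)\in\cP_{main}$. Your subsequent ``fiber-dimension analysis'' to upgrade this is not carried out and is not straightforward: a component $Z\supsetneq|K_X|$ not dominating $\Pic^\kappa(X)$ would have image contained in a positive-dimensional component of $V_1(X)$ through $0$, and such components do exist under your hypothesis (they come from pencils of genus $2,\dots,\lfloor q/2\rfloor$); bounding $\dim Z$ then requires controlling $h^1(K_X+\eta)$ along those loci, which you do not do. The paper avoids this entirely: by Theorem~\ref{thm:cast-gl}, for $L=K_X$ a $1$-transversal $v$ is automatically $k$-transversal for every $k>0$, so Theorem~\ref{thm:limit}(ii) applies directly and gives $(s)\in\cP_{main}$ in one stroke (this is Lemma~\ref{lem:key}). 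The same remark fixes your argument for $\Sigma_0\subset\cP_{main}\cap|K_X|$ in part (ii), where your justification (``any other lower-dimensional component of $\cP$ touching $|K_X|$ would produce a forbidden pencil'') is again unsubstantiated.

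\textbf{(2) Irreducibility and reducedness of $\Sigma$ do not follow from the ``generic Pfaffian description''.} One must show that the linear map $s\mapsto\omega_s$ is in sufficiently general position with respect to the Pfaffian stratification; this is where the bound $b\le q/2$ on pencil genera is actually used. The paper's mechanism is the dimension count of Lemma~\ref{lem:dimension}: over a non-$1$-transversal $[v]\in\pp(\overline{f^*H^0(K_B)})$ the fibre of $\pr_1$ has dimension at most $p_g-q+b-1$, so the non-main part of the incidence has dimension $<p_g-1$; irreducibility of $\Sigma$ then follows because only $\cI_{main}$ can dominate a divisor in $|K_X|$. Reducedness is established by an explicit tangency computation (Lemma~\ref{lem:sing}), not by genericity.

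\textbf{(3) ``Generic transversality'' does not compute $\Sing(\Sigma)$.} The singular locus of a linear section of the Pfaffian hypersurface equals the preimage of $\{\rk\le q-4\}$ only if the section is transversal at \emph{every} rank-$(q-2)$ point, not just the general one. The paper checks this pointwise (Lemma~\ref{lem:sing}): if $\rk(\cup s)=q-2$ with kernel $\langle\bar\alpha,\bar\beta\rangle$, one exhibits the line through $[s]$ and $[\alpha\wedge\beta]$ meeting $\Sigma$ simply at $[s]$; the hypothesis ``no pencil of genus $>1$'' is exactly what guarantees $\alpha\wedge\beta\neq 0$ for every such $s$, via Castelnuovo--de~Franchis.
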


As an immediate  consequence  of Theorem \ref{thm:para2}, we are able to  answer in the case of surfaces the question, raised in \cite[\S 7]{BN}, of what is the  relation between the base schemes of $|K_X|$ and $\cP_{main}$. Since for $q\ge 3$ the variety $|K_X|\cap \cP_{main}$ is a non degenerate subvariety of $|K_X|$, we have:
\begin{cor}\label{cor:base2} Let $X$ be a surface of general type  that has no irrational pencil of genus $>q/2$. If $X$ has irregularity $q\ge 3$, then the base scheme of $\cP_{main}$ is contained in the base scheme of $|K_X|$.
\end{cor}

 Theorem \ref{thm:para3} and Theorem \ref{thm:para2} imply immediately  necessary conditions for the irreducibility of $\cP$:
\begin{cor} Let $X$ be a smooth projective $n$-dimensional variety with   irregularity $q\ge n+1$ that has no Albanese general type   fibration.  If the paracanonical system $\cP_X$ is irreducible, then $X$ is one of the following:
\begin{enumerate}
\item[(a)] a curve,
\item[(b)] a surface with  $q$  odd,
\item[(c)] a variety of dimension $n\ge 3$ with $q=n+1$ and $p_g(X)=\chi(K_X)+q-1$.
\end{enumerate}
\end{cor}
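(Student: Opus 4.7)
The plan is to deduce the corollary from Theorems \ref{thm:para3} and \ref{thm:para2} by extracting from the hypothesis ``$\cP_X$ is irreducible'' the single geometric statement $|K_X|\subset\cP_{main}$, and then reading off the numerical constraints in each dimension.

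First I would observe that $|K_X|$ is a closed irreducible subvariety of $\cP_X$ (it is the fiber of the characteristic map $c_\kappa\colon\cP_X\to\Pic^\kappa(X)$ over $[K_X]$). For $n\ge 3$, Theorem \ref{thm:para3} itself yields $\chi(K_X)\ge q-n>0$, so the main component $\cP_{main}$ exists; irreducibility of $\cP_X$ then forces $\cP_X=\cP_{main}$ and hence $|K_X|\subset\cP_{main}$. The same conclusion holds for $n=2$ once $\chi(K_X)>0$ is assumed, which is the standing hypothesis of Theorem \ref{thm:para2}. The curve case $n=1$ is (a) and requires no further argument.

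For $n=2$ (so $q\ge 3$), the absence of Albanese general type fibrations amounts to having no irrational pencil of genus $\ge 2$, which is strictly stronger than the hypothesis of Theorem \ref{thm:para2}. If $q$ were even, part (ii) of that theorem would realize $\Sigma:=|K_X|\cap\cP_{main}$ as a hypersurface of $|K_X|$ of positive degree $q/2\ge 2$, hence a \emph{proper} subvariety of $|K_X|$; but $|K_X|\subset\cP_{main}$ forces $\Sigma=|K_X|$, a contradiction. Thus $q$ must be odd, which is (b). For $n\ge 3$, Theorem \ref{thm:para3}(ii) rephrases $|K_X|\subset\cP_{main}$ as the equality $p_g(X)=\chi(K_X)+q-1$, and (iii) of the same theorem then forces $q=n+1$, giving (c). The only step with any real content is the even-$q$ surface case, where one relies crucially on the explicit degree $q/2$ of $\Sigma$ provided by Theorem \ref{thm:para2}(ii) to rule out $\Sigma=|K_X|$.
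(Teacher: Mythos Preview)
Your argument is correct and matches the paper's intended reasoning (the paper states the corollary as an immediate consequence of Theorems \ref{thm:para3} and \ref{thm:para2} and gives no separate proof). One small point to tighten: for $n=2$ you write that $\chi(K_X)>0$ is ``the standing hypothesis of Theorem \ref{thm:para2}'', but the corollary itself does not assume it, so you should note that it follows from the corollary's hypotheses --- for a surface with $q\ge 3$ and no irrational pencil of genus $\ge 2$, the classical Castelnuovo--de Franchis inequality (or the Pareschi--Popa inequality cited in the proof of Theorem \ref{thm:para3}) gives $\chi\ge q-2\ge 1$. Also, in your final sentence you do not actually need the explicit value $q/2$ of the degree: the mere fact that $\Sigma$ is a hypersurface of $|K_X|$ already makes it a proper subvariety and yields the contradiction.
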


{\em Acknowledgments:} we wish to thank Mihnea Popa for some  useful mathematical communications.
\bigskip

{\bf Notation and conventions:} We work over the complex numbers; all varieties are projective.
If $X$ is a smooth projective variety,  we denote as usual  by  $p_g(X)$ the {\em geometric genus} $h^0(K_X)=h^n(\OO_X)$,  and by $q(X)$ the {\em irregularity} $h^0(\Omega^1_X)=h^1(\OO_X)$.   Recall that by Hodge theory  there is an antilinear isomorphism $H^0(\Omega^1_X)\to H^1(\OO_X)$ that we denote by  $\alpha\mapsto\bar \alpha$.
 We use the standard notations $\Pic(X)$, resp. $\NS(X)$,  for the group of divisors   modulo linear, resp. algebraic,  equivalence; given  $L\in \Pic(X)$, we denote by $\chi(L)$ its  Euler characteristic.\\
 For $V$  a complex vector space and $r\ge 1$ an integer, $\bG(r,V)$  denotes the Grassmannian of $r$-dimensional vector subspaces of $V$.

\section{Preliminaries}\label{sec:prelim}

We recall several known results on irregular varieties that are used repeatedly throughout the paper.
\subsection{Albanese dimension and irrational fibrations}\label{ssec:albanese}
Let $X$ be a smooth projective variety of dimension $n$. 
The {\em Albanese dimension} $\albdim(X)$ is defined as the dimension of the image of the Albanese map of $X$; in particular,  $X$ has {\em maximal Albanese dimension} if its Albanese map is generically finite onto its image and it is  of {\em Albanese general type} if in addition $q(X)>n$. For a normal  variety $Y$, we define the Albanese variety $\Alb(Y)$ and all the related notions  by considering  any smooth projective model of $Y$.

An {\em irregular fibration} $f\colon X\to Y$ is a morphism with positive dimensional connected fibers onto a normal  variety $Y$ with $\albdim Y=\dim Y>0$; the map $f$ is called an  {\em Albanese general type  fibration} if in addition $Y$ is of Albanese general type. If $\dim Y=1$, then $Y$ is a smooth curve of genus $b>0$; in that case, $f$ is called an {\em irrational pencil of genus $b$} and it is an Albanese general type fibration if and only if $b>1$. 

Notice that if $q(X)\ge n$ and $X$ has no Albanese general type fibration, then $X$ has  maximal Albanese dimension.

The so-called generalized Castelnuovo--de Franchis Theorem (see  \cite[Thm. 1.14]{catanese-irregular} and Ran \cite{ran}) shows  how  the existence of Albanese general type  fibrations  is detected by  the cohomology of $X$:
\begin{thm}[Catanese, Ran]\label{thm:cast-cat} 
The smooth projective variety $X$ has an Albanese general type  fibration $f\colon X\to Y$ with $\dim Y\le k$ if and only if there exist independent $1$-forms $\omega_0,\dots \omega_k \in H^0(\Omega^1_X)$ such that $\omega_0\wedge \omega_1\wedge \dots\wedge  \omega_k =0\in H^0(\Omega^{k+1}_X)$.
\end{thm}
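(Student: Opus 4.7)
The plan is to prove both implications of Theorem \ref{thm:cast-cat} separately, using induction on $k$ as the natural framework for the harder direction. The forward direction is a routine pull-back computation, while the converse requires extracting a fibration from the vanishing-wedge data.

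For the ``only if'' direction, given an Albanese general type fibration $f\colon X\to Y$ with $\dim Y=d\le k$, the condition $q(Y)>d$ lets me choose $d+1$ linearly independent $1$-forms on $Y$ whose wedge lies in $H^0(\Omega^{d+1}_Y)=0$ and thus vanishes. Pulling back by $f$ (which is injective on global forms) yields linearly independent forms on $X$ with vanishing wedge, and if $k>d$ one extends to $k+1$ linearly independent forms by adjoining further sections of $\Omega^1_X$; the vanishing wedge is preserved since the already-zero $(d+1)$-subwedge is a factor.

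For the ``if'' direction I induct on $k$. The base case $k=1$ is the classical Castelnuovo--de Franchis theorem: linearly independent $\omega_0,\omega_1\in H^0(\Omega^1_X)$ with $\omega_0\wedge\omega_1=0$ are pointwise proportional, so $\omega_0/\omega_1$ is a well-defined meromorphic function; Stein factorization of the induced rational map $X\dashrightarrow\bP^1$ yields a fibration $f\colon X\to C$ onto a smooth curve of genus $\ge 2$ (since both forms descend to linearly independent forms on $C$). For the inductive step, set $W=\langle\omega_0,\dots,\omega_k\rangle$: if some linearly independent proper subset of $W$ already has vanishing wedge, the inductive hypothesis supplies the fibration. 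Otherwise, using the identification $H^0(\Omega^1_X)=H^0(\Omega^1_{\Alb(X)})$, I form the largest quotient abelian variety $B$ of $\Alb(X)$ through which $W$ descends, compose with the Albanese map to produce $g\colon X\to B$, and Stein-factorize $g=h\circ f$ with $f\colon X\to Y$ having connected fibers and $h\colon Y\to B$ finite. Then $Y$ has maximal Albanese dimension (being finite over a subvariety of $B$), and the $\omega_i$ descend to linearly independent $1$-forms on $Y$ with still-vanishing top wedge.

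The main obstacle is showing that $\dim Y\le k$ and that the resulting fibration is of Albanese general type. In the non-degenerate case (no proper subwedge vanishes) the descended forms have generic rank exactly $k$ on $Y$, defining an integrable codimension-$k$ distribution by closed holomorphic $1$-forms; I would invoke the algebraicity of such foliations on compact K\"ahler manifolds---the fact that closed holomorphic forms integrate to a map into a complex torus whose image, after Stein factorization, yields an algebraic morphism---to produce a further fibration $Y\to Z$ with $\dim Z\le k$. Composition then gives an Albanese general type fibration $X\to Z$, since the $k+1$ independent forms descend to $Z$ forcing $q(Z)\ge k+1>\dim Z$. The delicate step, where I expect the real work to lie, is verifying that the foliation target has the claimed dimension and supports all $k+1$ forms as linearly independent sections, rather than a smaller target on which linear relations would force $q(Z)\le\dim Z$ and spoil the Albanese general type conclusion.
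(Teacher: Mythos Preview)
The paper does not prove this theorem: it is quoted in \S\ref{sec:prelim} as a known result, with attribution to Catanese \cite{catanese-irregular} and Ran \cite{ran}, and is used as a black box thereafter. So there is no ``paper's own proof'' to compare against; I will comment on your proposal on its merits and against the published arguments.

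Your forward direction and the base case $k=1$ are fine. The inductive step, however, has a genuine gap at the construction of $B$. An arbitrary $(k{+}1)$-dimensional subspace $W\subset H^0(\Omega^1_{\Alb(X)})$ need not be the space of invariant $1$-forms of any quotient abelian variety: quotients of $\Alb(X)$ correspond to complex subtori, hence to subspaces of $H^1(\OO_X)$ defined over $\Q$ with respect to the lattice, and a generic $W$ will not have this property. If you take instead the \emph{largest} quotient $B$ whose forms contain $W$, then $H^0(\Omega^1_B)$ may be strictly larger than $W$, the map $g\colon X\to B$ can have image of dimension $>k$, and the Stein factor $Y$ no longer has $\dim Y\le k$. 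Your subsequent plan to pass to a further fibration $Y\to Z$ via ``algebraicity of foliations by closed holomorphic $1$-forms'' is then carrying the entire weight of the theorem, and as you yourself note, this step is exactly the content you are trying to prove.

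Catanese's actual argument bypasses abelian quotients entirely. In the non-degenerate case the unique (up to scalar) linear relation $\sum a_i(x)\,\omega_i(x)=0$ defines a rational map $\varphi\colon X\dashrightarrow \pp^k$, $x\mapsto [a_0(x):\cdots:a_k(x)]$. Differentiating the relation and using $d\omega_i=0$ gives $\sum da_i\wedge\omega_i=0$; combined with non-degeneracy this forces each $da_i$ to lie in the span of the $\omega_j$, so the fibres of $\varphi$ are tangent to the rank-$k$ distribution and, after resolving indeterminacy and Stein-factorising, one obtains $f\colon X\to Y$ with $\dim Y\le k$ and all $\omega_i$ pulled back from $Y$, whence $q(Y)\ge k+1>\dim Y$. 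This is the step your outline is missing: the algebraicity comes from the \emph{relation map} to projective space, not from a map to an abelian variety.
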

A closely related result,  due to Green and Lazarsfeld, is recalled in the next section (Theorem \ref{thm:cast-gl}).

\subsection{Generic vanishing}

Let $X$ be a projective variety of dimension $n$ and let $L\in \Pic(X)$; 
the  $i$-th cohomological support locus of   $L$ is defined as  $V_i(L, X):=\{\eta\ | \ h^i(L+\eta)>0\}\subseteq \Pic^0(X)$, $i=0,\dots n$. The cohomological support loci are closed by the semi-continuity theorem.
One  says  that {\em generic vanishing holds for $L$} if $V_i(L,X)$ is a proper subvariety of $\Pic^0(X)$ for $i>0$.

We identify, as usual, the tangent space to $\Pic^0(X)$ at any point with $H^1(\OO_X)$, so we regard the elements of $H^1(\OO_X)$ as tangent directions. Given  $0\ne v\in H^1(\OO_X)$, the {\em derivative complex} of $L$ in the direction $v$ is:
\begin{equation}\label{eq:derivative}
0\to H^0(L)\overset{\cup v}{\lra} H^1(L)\overset{\cup v}{\lra}\dots \overset{\cup v}{\lra} H^n(L)\to 0.
\end{equation}
 We say that $v$ is {\em $k$-transversal} to $L$ if  the $k$-th cohomology group of the complex \eqref{eq:derivative} vanishes. This terminology is explained by the following key result (cf. \cite[\S 1]{GL1}, in particular Thm. 1.6):
\begin{prop}[Green-Lazarsfeld]\label{prop:derivative} Let $X$ be a smooth projective variety and let $L\in \Pic(X)$; consider a point $\eta\in V_k(L,X)$  and a nonzero direction $ v\in H^1(\OO_X)$. Then:
\begin{enumerate}
\item if $v$ is $k$-transversal to $L+\eta$, then $v$ is not tangent to $V_k(L,X)$ at $\eta$;
\item if $\eta\in V_k(L,X)$ is  general, then  either  $v$ is  $k$-transversal to $L+\eta$ or  both  maps in the sequence 
$$H^{k-1}(L+\eta)\overset{\cup v}{\lra} H^k(L+\eta)\overset{\cup v}{\lra} H^{k+1}(L+\eta)$$ vanish.
\end{enumerate}
\end{prop}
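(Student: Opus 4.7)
I follow the classical Green--Lazarsfeld derivation from \cite{GL1}. Near $\eta$, let $\cL$ be the universal line bundle on $X\times\Pic^0(X)$ whose restriction to $X\times\{\xi\}$ is $L+\xi$, and let $\pi\colon X\times\Pic^0(X)\to\Pic^0(X)$ be the second projection. I represent $\mathbf{R}\pi_*\cL$ in a neighborhood of $\eta$ by a minimal complex of holomorphic vector bundles $E^\bullet$, chosen so that $\dim E^i=h^i(L+\eta)$ and $d^i(\eta)=0$. In this minimal presentation the first-order Taylor coefficient of $d^i$ at $\eta$ in a tangent direction $v\in T_\eta\Pic^0(X)\cong H^1(\OO_X)$ is cup product with $v$, so that
\[
d^i(\eta+tv)=t\bigl[(\cup v)+tQ_i(t)\bigr]
\]
for some holomorphic matrix-valued $Q_i$. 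Set $a:=\rk(\cup v\colon H^{k-1}(L+\eta)\to H^k(L+\eta))$, $b:=\rk(\cup v\colon H^k(L+\eta)\to H^{k+1}(L+\eta))$, and $h:=h^k(L+\eta)$. The pointwise formula $h^k(L+\xi)=\dim E^k-\rk d^{k-1}(\xi)-\rk d^k(\xi)$ combined with the lower semi-continuity of matrix rank yields
\[
h^k(L+\eta+tv)\leq h-a-b\qquad(0<|t|\ll 1).
\]

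For (i), $k$-transversality is equivalent to $\dim\ker(\cup v\colon H^k\to H^{k+1})=\dim\im(\cup v\colon H^{k-1}\to H^k)$, i.e.\ $a+b=h$; the bound above then forces $h^k(L+\eta+tv)=0$, and the same computation along an arbitrary analytic arc through $\eta$ tangent to $v$ shows that no such arc can lie in $V_k(L,X)$, whence $v$ is not tangent to $V_k(L,X)$ at $\eta$.

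For (ii), I may assume $\eta$ is a smooth point of a unique irreducible component $W$ of $V_k(L,X)$ on which $h^k(L+\xi)=h$ is generic; the cohomology formula then forces $d^{k-1}$ and $d^k$ to vanish identically on $W$ in a neighborhood of $\eta$. If $v\in T_\eta V_k(L,X)=T_\eta W$, pick an analytic arc $\xi(t)\subset W$ with $\xi'(0)=v$; differentiating $d^i(\xi(t))\equiv 0$ at $t=0$ gives $\cup v=0$ on both $H^{k-1}(L+\eta)\to H^k(L+\eta)$ and $H^k(L+\eta)\to H^{k+1}(L+\eta)$, which is the second alternative. Conversely, if $v\notin T_\eta V_k(L,X)$ then $\eta+tv$ leaves $W$ transversally for $0<|t|\ll 1$, so $h^k(L+\eta+tv)=0$, giving $\rk d^{k-1}(\eta+tv)+\rk d^k(\eta+tv)=h$; combined with $\rk d^i(\eta+tv)\geq\rk(\cup v)$ this only yields $a+b\leq h$.

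Upgrading this to an equality is the main technical obstacle. It amounts to ruling out any rank jump in the perturbed matrices $(\cup v)+tQ_i(t)$ away from $t=0$, i.e.\ establishing $\rk d^i(\eta+tv)=\rk(\cup v)$ for small $t\neq 0$. I would handle this via the BGG-type analysis of the derivative complex at a general point of $V_k(L,X)$ developed in \cite[\S 1]{GL1}---equivalently, by appealing to Simpson's structural theorem that each component of $V_k(L,X)$ is a translate of a subtorus of $\Pic^0(X)$, which linearizes $W$ near $\eta$ and prevents higher-order rank jumps. Once $a+b=h$, $v$ is $k$-transversal and the dichotomy in (ii) is established.
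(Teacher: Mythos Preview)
The paper does not prove this proposition: it is stated with attribution to Green--Lazarsfeld \cite[\S 1, in particular Thm.~1.6]{GL1} as a preliminary result, and no argument is given. So there is nothing in the paper to compare against, and your proposal has to stand on its own.

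Your treatment of (i) is correct and is the standard minimal-complex argument. In (ii), the implication ``$v$ tangent to $W$ $\Rightarrow$ both cup maps vanish'' is also fine: differentiating $d^{k-1}|_W\equiv 0$ and $d^k|_W\equiv 0$ along an arc in $W$ does give $\cup v=0$ in both degrees.

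The gap you flag in the remaining implication is genuine, and neither of your proposed fixes closes it. From $\rk d^{k-1}(\eta+tv)+\rk d^k(\eta+tv)=h$ together with $\rk d^i(\eta+tv)\ge\rk(\cup v)$ you only obtain $a+b\le h$, which is already automatic from $v\cup v=0$; what is needed is to rule out any rank jump in $(\cup v)+tQ_i(t)$ for $t\ne 0$. Invoking Simpson's theorem is both anachronistic (it postdates \cite{GL1} and is far deeper) and ineffective: linearity of $W$ constrains \emph{where} $d^{k-1}$ and $d^k$ vanish, but says nothing about the higher-order coefficients $Q_i(t)$, which are precisely what can raise the rank above that of $\cup v$. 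And ``appeal to the BGG-type analysis in \cite[\S 1]{GL1}'' is a citation of the very proof you are meant to be supplying, not an argument. The actual reasoning in \cite{GL1} uses the genericity of $\eta$ in a sharper way than a one-line rank estimate; you should work through \cite[proof of Thm.~1.6]{GL1} directly rather than try to shortcut it.
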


Proposition \ref{prop:derivative} has the following immediate consequence (in fact, the generic vanishing theorem  for $K_X$ was first proven in \cite{GL1} using this argument):
\begin{cor}\label{cor:gv}
Let $X$ be a smooth projective variety and let $L\in \Pic^0(X)$; if there exists $\eta\in \Pic^0(X)$ and $v\in H^1(\OO_X)$ such that $v$ is $k$-transversal to $L+\eta$ for every $k>0$, then generic vanishing holds for $L$.
\end{cor}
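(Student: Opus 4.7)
The plan is to derive the conclusion directly from Proposition \ref{prop:derivative}(i), applied one cohomological support locus at a time. Recall that generic vanishing for $L$ is by definition the assertion that $V_k(L,X)$ is a proper subvariety of $\Pic^0(X)$ for every $k>0$, so I would fix an arbitrary $k>0$ and argue separately.

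For that fixed $k$, there are two cases. If $\eta\notin V_k(L,X)$, then $V_k(L,X)$ already fails to contain the point $\eta$, so it is proper and there is nothing to do. In the remaining case $\eta\in V_k(L,X)$, the hypothesis supplies a direction $v\in H^1(\OO_X)$ that is $k$-transversal to $L+\eta$; Proposition \ref{prop:derivative}(i) then guarantees that $v$ is not tangent to $V_k(L,X)$ at $\eta$. In particular, the Zariski tangent space of $V_k(L,X)$ at $\eta$ is a proper subspace of $H^1(\OO_X)=T_\eta\Pic^0(X)$. Hence $V_k(L,X)$ cannot contain any analytic neighborhood of $\eta$, and a fortiori $V_k(L,X)\subsetneq\Pic^0(X)$. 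Since this is done for arbitrary $k>0$, generic vanishing for $L$ follows.

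There is essentially no obstacle here: the corollary is a one-line deduction from part (i) of the previous proposition. The only mildly subtle observation is conceptual rather than technical—namely that it suffices to exhibit a single point $\eta$ and a single direction $v$ with the required transversality property to rule out $V_k(L,X)$ being all of $\Pic^0(X)$, because tangent-space obstructions are local and a variety equal to $\Pic^0(X)$ must have every vector as a tangent at every point.
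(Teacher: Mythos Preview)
Your proof is correct and matches the paper's intended argument: the paper offers no separate proof, simply labeling the corollary an ``immediate consequence'' of Proposition~\ref{prop:derivative}, which is exactly the deduction you spell out. Your case split and the tangent-space observation at $\eta$ are precisely what makes the implication immediate.
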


When $L=K_X$, we omit $L$ from the notation and simply write $V_i(X)$; the loci $V_i(X)$ are also  called {\em generic vanishing loci} of $X$. We recall the main facts about them: 
\begin{thm}[Green-Lazarsfeld, Simpson]\label{thm:gv-summary} Assume that $X$ has maximal Albanese dimension. Then 
\begin{enumerate}
\item $V_i(X)$ has codimension $\ge i$ in $\Pic^0(X)$ for $i=0,\dots n$;
\item  $V_0(X)\supseteq V_1(X)\supseteq \dots \supseteq V_n(X)=\{0\}$;
\item the components of $V_i(X)$ are translates of complex subtori of $\Pic^0(X)$ by torsion points.
\end{enumerate}
\end{thm}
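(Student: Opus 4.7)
The three statements in Theorem \ref{thm:gv-summary} come from different sources (Green--Lazarsfeld for (i) and (ii), Simpson for (iii)), so my plan is to treat them separately, leveraging Proposition \ref{prop:derivative} for the first two and invoking nonabelian Hodge theory for the third.

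For (i), I would fix an irreducible component $Z\subseteq V_i(X)$ of maximal dimension, take a general (smooth) point $\eta\in Z$, and bound $\dim T_\eta Z\subseteq H^1(\OO_X)$ from above. By Proposition \ref{prop:derivative}(ii), every tangent vector $v\in T_\eta Z$ has the property that both cup-product maps $H^{i-1}(K_X+\eta)\overset{\cup v}{\to} H^i(K_X+\eta)\overset{\cup v}{\to} H^{i+1}(K_X+\eta)$ vanish. Writing $v=\bar\omega$ with $\omega\in H^0(\Omega^1_X)$ via Hodge conjugation and dualizing via Serre duality, this translates into a vanishing statement for iterated wedge products of $\omega$ against holomorphic forms on $X$. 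The generalized Castelnuovo--de Franchis theorem (Theorem \ref{thm:cast-cat}) then produces an Albanese general type fibration $f\colon X\to Y$ with $\dim Y\le n-i$ whose pulled-back $1$-forms contain the conjugate space $\overline{T_\eta Z}$; combined with the hypothesis $\albdim X=n$, this forces $\dim T_\eta Z\le q(X)-i$, giving the codimension bound.

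For (ii), the equality $V_n(X)=\{0\}$ is immediate from Serre duality: $H^n(K_X+\eta)\iso H^0(-\eta)^\vee$ is nonzero precisely when $\eta=0$. For the inclusions $V_i(X)\supseteq V_{i+1}(X)$, I would combine (i) with the constancy of $\chi(K_X+\eta)$ along $\Pic^0(X)$: by (i) a generic $v\in H^1(\OO_X)$ is $k$-transversal to $K_X+\eta$ for every $k>0$ at every $\eta$ outside a proper subvariety, so Proposition \ref{prop:derivative}(i) rules out $v$ being tangent to higher $V_k$'s. Propagating non-vanishing downward through the derivative complex using the Euler-characteristic identity, or equivalently appealing to Hacon's theorem that the Fourier--Mukai transform $R\widehat S(\omega_X)$ is a sheaf concentrated in a single degree on the dual abelian variety, yields the chain of inclusions.

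For (iii), this is Simpson's theorem. The outline: the loci $V_i(X)$ are defined by algebraic equations with coefficients in $\overline\Q$, so they are stable under $\mathrm{Gal}(\overline\Q/\Q)$; Simpson's $\bC^*$-action on the moduli of semisimple rank-one local systems (coming from nonabelian Hodge theory) preserves each $V_i$ as well; and the combination forces every irreducible component to contain a dense set of torsion characters, whence it must be a torsion translate of a subtorus. The principal obstacle is this last step: it rests essentially on transcendental input (the nonabelian Hodge correspondence together with Kronecker-style density arguments) and cannot be reached by the derivative-complex methods used in (i) and (ii).
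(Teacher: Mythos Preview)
The paper does not prove this theorem; its ``proof'' consists solely of citations: (i) is \cite[Thm.~2.10]{GL1}, (ii) is \cite[Lem.~1.8]{theta}, and (iii) combines \cite{GL2} (subtorus structure) with \cite{simpson} (torsion translates). Your attempt to sketch the actual arguments therefore goes well beyond what the paper does.

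That said, your sketch for (i) has a genuine gap. From Proposition~\ref{prop:derivative} you correctly deduce that for $\eta$ general in a component $Z\subseteq V_i(X)$, every nonzero $v\in T_\eta Z$ has both cup-product maps $\cup v$ vanishing. But Theorem~\ref{thm:cast-cat} requires a relation $\omega_0\wedge\cdots\wedge\omega_k=0$ among \emph{several} independent $1$-forms, and nothing in your argument produces such a wedge relation among a basis of $\overline{T_\eta Z}$ from the individual cup-product vanishings. If instead you appeal to Theorem~\ref{thm:cast-gl}, you obtain for each $v$ separately a fibration $f_v\colon X\to Y_v$ with $\dim Y_v\le n-i$, but different tangent vectors may a priori yield different fibrations, so you cannot conclude that $\overline{T_\eta Z}\subseteq f^*H^0(\Omega^1_Y)$ for a \emph{single} $f$. (Granting that single-fibration statement, your final inequality would indeed follow, since $\albdim X=n$ forces $q(Y)\le q(X)-i$; but establishing it is essentially the content of the subtorus theorem in \cite{GL2}, so you are implicitly using part of (iii) to prove (i).) The original argument in \cite{GL1} avoids fibrations altogether and bounds $\dim T_\eta Z$ directly via Hodge theory, exploiting that $H^0(\Omega^1_X)\to\Omega^1_{X,x}$ is surjective at a general point $x$ when $\albdim X=n$.

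Two minor points: for (ii) the paper's reference is Ein--Lazarsfeld \cite{theta} rather than Hacon's Fourier--Mukai argument (though the latter also works); and in (iii) only the torsion refinement is due to Simpson, the subtorus statement itself being the main result of \cite{GL2}.
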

\begin{proof} (i) is  \cite[Thm. 2.10]{GL1},  (ii) is \cite[Lem. 1.8]{theta}. The fact that the components of $V_i(X)$ are translates of subtori is the main result of \cite{GL2}. Finally, the fact that they are translates by torsion points is proven in \cite{simpson}.
\end{proof}

For  $L$  algebraically equivalent to $K_X$, the  condition that $v$ is $k$-transversal to $L$ is related to the existence of irregular fibrations, as shown by the following result (\cite[Thm. 5.3]{GL2}), that can also be regarded as a generalization of the Castelnuovo-de Franchis theorem:
\begin{thm}[Green-Lazarsfeld]\label{thm:cast-gl} Let $X$ be a smooth projective variety of maximal Albanese dimension, let  $\eta\in \Pic^0(X)$,  let $0\ne v\in H^1(\OO_X)$ and set $\omega:=\bar{v}\in H^0(\Omega^1_X)$. If $v$ is not $k$-transversal to $K_X+\eta$, then there exists an irregular fibration $f\colon X\to Y$ with $\dim Y\le n-k$ such that $\omega$ is the pull-back of  a  $1$-form $\alpha\in H^0(\Omega^1_{\Alb(Y)})$.
\end{thm}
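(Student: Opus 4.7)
The plan is to translate the cohomological failure of $k$-transversality into a geometric wedge-product identity for holomorphic $1$-forms involving $\omega$, and then extract the claimed fibration from that identity in the spirit of the generalized Castelnuovo--de Franchis theorem.

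As a first step I would dualize. By Serre duality, $H^i(K_X+\eta)\cong H^{n-i}(-\eta)^*$ and cup product by $v$ is, up to sign, adjoint to cup product by $v$ on $H^\bullet(-\eta)$. Thus failure of $k$-transversality for $K_X+\eta$ at $v$ yields a nonzero class $\xi\in H^{n-k}(-\eta)$ with $v\cup\xi=0$ and $\xi\notin v\cup H^{n-k-1}(-\eta)$. Using the flat $C^\infty$-trivialization of $\eta$ and $\bar\partial$-harmonic representatives, I would represent $\xi$ by a $(0,n-k)$-form $\phi$ on which cup with $v$ realizes, after harmonic projection, as wedge with $\bar\omega$.

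The heart of the argument is then to extract from $\phi$ holomorphic $1$-forms $\omega_1,\dots,\omega_r\in H^0(\Omega^1_X)$ with $r\le n-k$ such that
$$\omega\wedge\omega_1\wedge\dots\wedge\omega_r=0\quad\text{in }H^0(\Omega^{r+1}_X),$$
while $\omega,\omega_1,\dots,\omega_r$ span an $(r+1)$-dimensional subspace $W\subset H^0(\Omega^1_X)$. The mechanism is the BGG/Koszul module structure on $\bigoplus_i H^i(-\eta)$ over $\Lambda^\bullet H^1(\OO_X)$: cup products by elements of $H^1(\OO_X)$ intertwine, via Hodge theory, with wedge products by the corresponding holomorphic $1$-forms on the Dolbeault side, so a nontrivial element of $\ker(\cup v)/\mathrm{im}(\cup v)$ produces genuine holomorphic wedge relations after pairing $\phi$ with tuples of $(1,0)$-forms via Serre duality. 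One chooses $r$ minimal so that such a wedge relation involving $\omega$ holds, ensuring $\omega$ is not a linear combination of $\omega_1,\dots,\omega_r$ alone.

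With this wedge relation in hand, the last step is geometric: Theorem \ref{thm:cast-cat} applied to the independent forms $\omega,\omega_1,\dots,\omega_r$ produces an Albanese general type (in particular irregular) fibration $f\colon X\to Y$ with $\dim Y\le r\le n-k$, and $W\subset f^*H^0(\Omega^1_{\Alb(Y)})$, so in particular $\omega=f^*\alpha$ for some $\alpha\in H^0(\Omega^1_{\Alb(Y)})$. The main obstacle is the middle step: the operator $\cup v$ on $H^\bullet(-\eta)$ is \emph{not} literally wedge by $\omega$, and making the comparison effective so that the abstract obstruction $\xi$ yields a \emph{usable} holomorphic wedge identity on $X$, rather than only a weaker cohomological relation, requires the full BGG-type analysis of derivative complexes of flat line bundles developed in \cite{GL2}. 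The dualization and the Castelnuovo--de Franchis extraction are, by contrast, comparatively formal.
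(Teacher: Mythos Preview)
The paper does not give a proof of Theorem~\ref{thm:cast-gl}; it is quoted from \cite[Thm.~5.3]{GL2} as a background result, so there is no in-paper argument to compare your proposal against.

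That said, your outline is a fair high-level summary of the Green--Lazarsfeld strategy: Serre-dualize to work with $H^\bullet(-\eta)$, convert the cohomological failure of exactness into a genuine wedge relation $\omega\wedge\omega_1\wedge\cdots\wedge\omega_r=0$ among holomorphic $1$-forms with $r\le n-k$, and then invoke the generalized Castelnuovo--de Franchis theorem to produce the fibration. You correctly identify that the entire difficulty sits in the middle step, and you essentially defer it to \cite{GL2}; as written, your proposal is therefore a roadmap rather than a proof. Two small remarks on the endgame: first, Theorem~\ref{thm:cast-cat} as stated here only asserts the existence of the fibration, while you need the stronger (but standard) conclusion that the subspace $W$ spanned by $\omega,\omega_1,\dots,\omega_r$ is pulled back from $Y$, so that in particular $\omega=f^*\alpha$; second, the conclusion of Theorem~\ref{thm:cast-gl} only demands an \emph{irregular} fibration, whereas Theorem~\ref{thm:cast-cat} yields an Albanese general type one---this is harmless (it is stronger), but worth noting since the two notions are distinguished in \S\ref{ssec:albanese}.
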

Proposition \ref{prop:derivative} and  Theorem \ref{thm:cast-gl} for $\eta=0$, give the following:
\begin{cor}\label{cor:isolated}
If $X$ has no irregular fibration, then $0$ is an isolated point of $V_i(X)$ for every $i>0$.

\end{cor}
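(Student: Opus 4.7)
The plan is to argue by contradiction directly from the two cited results, exactly as the statement of the corollary advertises. First I would verify that the hypothesis forces $X$ to have maximal Albanese dimension, so that Theorem~\ref{thm:cast-gl} is available. If $q(X)=0$ the statement is vacuous since $\Pic^0(X)$ is a single point. If $q(X)>0$ and the Albanese map $a\colon X\to \Alb(X)$ were not generically finite, the Stein factorization $X\to Y\to a(X)$ would produce a morphism with positive dimensional connected fibres onto a normal variety $Y$ with $\dim Y=\dim a(X)>0$ equal to $\albdim Y$ (a desingularization of $Y$ maps finitely to the subvariety $a(X)$ of an abelian variety), i.e.\ an irregular fibration, contradicting the hypothesis.

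Now assume for contradiction that $0$ is not an isolated point of $V_i(X)$ for some $i>0$. By Theorem~\ref{thm:gv-summary}(ii) we have $0\in V_n(X)\subseteq V_i(X)$, so the hypothesis translates into the existence of a nonzero analytic tangent direction $v\in H^1(\OO_X)=T_0\Pic^0(X)$ tangent to $V_i(X)$ at $0$. Applying the contrapositive of Proposition~\ref{prop:derivative}(i) with $L=K_X$, $\eta=0$ and $k=i$, such a $v$ cannot be $i$-transversal to $K_X$. Theorem~\ref{thm:cast-gl}, applied to this $v$ with $\eta=0$, then produces an irregular fibration $f\colon X\to Y$ with $\dim Y\le n-i<n$, contradicting the hypothesis that $X$ has no irregular fibration.

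There is no genuine obstacle to the argument: the corollary is essentially the contrapositive of the conjunction of Proposition~\ref{prop:derivative}(i) and Theorem~\ref{thm:cast-gl}. The only delicate point is the initial reduction to maximal Albanese dimension, required to legitimately invoke Theorem~\ref{thm:cast-gl}, but this follows from the no-irregular-fibration hypothesis in a straightforward way via Stein factorization of the Albanese map.
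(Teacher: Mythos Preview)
Your argument is correct and follows exactly the route the paper indicates: combine the contrapositive of Proposition~\ref{prop:derivative}(i) with Theorem~\ref{thm:cast-gl} at $\eta=0$. Your explicit reduction to maximal Albanese dimension via Stein factorization of the Albanese map is a welcome added detail, since Theorem~\ref{thm:cast-gl} does require that hypothesis and the paper leaves this step implicit.
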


\section{Proof of Theorem \ref{thm:limit}}\label{sec:continuous}

The key step in the proof of Theorem \ref{thm:limit} is a deformation result for sections of $L$ that we state next. Denote by  $R$ the ring $\C[[t]]$ of formal power series; for $0\ne v\in H^1(\OO_X)$,   there is a morphism $\phi_v\colon \Spec R\to \Pic^{\lambda}(X)$ defined by $t\mapsto L\otimes \exp(tv)$, where $\exp\colon H^1(\OO_X)\to \Pic^0(X)$ denotes the exponential mapping.
 Pulling back via $\phi_v$  a Poincar\'e line bundle on $X\times \Pic(X)$, one obtains a deformation $L_v(t)$ of $L$ over $\Spec R$ that we call the {\em (formal) straight line deformation of $L$ in the direction $v$}. 
   
  \begin{prop}\label{prop:lift}
   Let $X$ be a smooth projective variety, let   $L\in \Pic(X)$ be  a line bundle and  let    $s\in H^0(L)$ be a nonzero section. If $v\in H^1(\OO_X)$ is $1$-transversal  to $L$ and  $s\cup v=0$, 
 then  the formal  straight line deformation $L_v(t)$ of $L$ in the direction $v$  can be lifted to a deformation $(s_v(t), L_v(t))$ of the pair $(s,L)$.
 \end{prop}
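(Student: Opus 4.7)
The plan is to build the formal section $s_v(t)=\sum_{k\ge 0}s^{(k)}t^k$ order by order using the Dolbeault resolution. Fix a $\debar$-closed $(0,1)$-form $\beta$ on $X$ representing $v\in H^1(\OO_X)$; since $\exp(tv)$ corresponds, in Dolbeault cohomology, to the line bundle with $(0,1)$-part $t\beta$, the straight line deformation $L_v(t)$ of $L$ is encoded by the formal Cauchy--Riemann operator $\debar+t\beta$ acting on smooth sections of $L$. Consequently, a formal lift of the pair $(s,L)$ amounts to a sequence $\{s^{(k)}\}_{k\ge 0}$ of smooth sections of $L$ with $s^{(0)}=s$ satisfying
\[
\debar s^{(k)}=-\beta\wedge s^{(k-1)},\qquad k\ge 1.
\]

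Suppose inductively that $s^{(0)},\dots,s^{(k-1)}$ satisfy these equations up to order $k-1$ (setting $s^{(-1)}:=0$). The right-hand side $\beta\wedge s^{(k-1)}$ is automatically $\debar$-closed, since
\[
\debar(\beta\wedge s^{(k-1)})=-\beta\wedge\debar s^{(k-1)}=\beta\wedge\beta\wedge s^{(k-2)}=0,
\]
and thus represents an obstruction class $o_k\in H^1(L)$; the equation at order $k$ can be solved precisely when $o_k=0$.

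The decisive observation is that $o_k\cup v=[\beta\wedge\beta\wedge s^{(k-1)}]=0$ in $H^2(L)$, as $\beta\wedge\beta=0$ for a $(0,1)$-form. The $1$-transversality of $v$ to $L$ means exactly that $\ker(\cup v\colon H^1(L)\to H^2(L))=\im(\cup v\colon H^0(L)\to H^1(L))$, so there exists a holomorphic section $a\in H^0(L)$ with $[\beta\wedge a]=o_k$. Replacing $s^{(k-1)}$ by $s^{(k-1)}-a$ preserves the equation at order $k-1$ (because $\debar a=0$) and makes $\beta\wedge s^{(k-1)}$ a $\debar$-coboundary; this yields a smooth $s^{(k)}$ with $\debar s^{(k)}=-\beta\wedge s^{(k-1)}$, completing the inductive step.

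The base case $k=1$ is immediate, as $o_1=[\beta\wedge s]$ coincides with $s\cup v$, which vanishes by hypothesis, so no correction to $s^{(0)}=s$ is needed. Iterating produces the desired formal section $s_v(t)$ of $L_v(t)$. The main conceptual content lies in the identity $o_k\cup v=0$, which in this Dolbeault model reduces to the trivial fact $\beta\wedge\beta=0$; this is the precise mechanism by which the algebraic $1$-transversality condition tames all higher-order obstructions, not merely the first.
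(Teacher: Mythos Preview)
Your proof is correct and takes a cleaner, genuinely different route from the paper's argument. The paper works in \v{C}ech cohomology with a \emph{multiplicative} ansatz $s_i(t)=s_i\exp\bigl(\sigma_it+\sum_{r\ge2}\tau_i^{(r)}t^r\bigr)$, so the obstructions appear as classes $d_n\sigma^n\in H^1(\OO_X((n-1)D))$ arising from the restriction sequences for $D=(s)$; it then needs two auxiliary lemmas---one to show $v\cup d_2\sigma^2=0$ and to adjust $\sigma$ once so that $d_2\sigma^2=0$, and a second Dolbeault lemma to propagate this to $d_n\sigma^n=0$ for every $n\ge2$. Your \emph{additive} Dolbeault ansatz $s_v(t)=\sum s^{(k)}t^k$ places each obstruction $o_k$ directly in $H^1(L)$, and the crucial identity $o_k\cup v=0$ becomes the bare fact $\beta\wedge\beta=0$; the induction is then uniform, with a holomorphic correction to $s^{(k-1)}$ at every step rather than a single correction followed by a nontrivial vanishing lemma. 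What your approach buys is transparency and brevity; what the paper's approach buys is the slightly sharper conclusion that after one modification at order two all higher obstructions vanish outright, so no further adjustments are needed. The underlying mechanism---the vanishing $\beta\wedge\beta=0$ (equivalently $\debar\psi\wedge\debar\psi=0$ in the paper's notation)---is the same in both.
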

 
 Granting  Proposition \ref{prop:lift} for the moment, we prove Theorem \ref{thm:limit}.
 \begin{proof}[Proof of Thm. \ref{thm:limit}]
 (i) By Proposition \ref{prop:lift},  there exists a formal deformation $(s_v(t), L_v(t))$ of the pair $(s,L)$  over $\Spec R$, where  $L_v(t)$ is the straight line deformation of $L$ in the direction $v$. By Artin's convergence Theorem (\cite[Thm. 1.2]{artin}), there exists an analytic deformation $(\tilde{s}(t), \tilde{L}(t))$ of $(s,L)$  over a small disk around $0\in \C$ such that $(s_v(t), L_v(t))=(\tilde{s}(t), \tilde{L}(t)) \mod t^2$. 
 \smallskip
 
 (ii) Since $v$ is $k$-transversal, $L$ satisfies generic vanishing by Proposition \ref{prop:derivative}, hence $\rho(L)=\chi(L)>0$. 
  To prove the claim, it  is enough to show that for $t\ne 0$ small  the analytic deformation $(\tilde{s}(t), \tilde{L}(t))$ is not contained in an irreducible  component $Z$ of $\Div^{\lambda}(X)$ different from the main component. By the semicontinuity of the dimension of cohomology groups, it follows that the image in $\Pic^{\lambda}(X)$ of such a component is contained in $L+V_k(L,X)$ for some $k>0$. By Proposition \ref{prop:derivative}, the vector $v$, being $k$-transversal to $L$, is not tangent to $L+V_k(L,X)$ at the point $L\in \Pic^{\lambda}(X)$, hence $\tilde{L}(t)\notin L+V_k(L,X)$ for $t\ne 0$ small. 
 \end{proof}
 
 The rest of the section is devoted to the proof of Proposition  \ref{prop:lift}.  We need to state and prove some  technical results. 
  Let $D$ be the  divisor of zeros of $s$;   for $n\ge 1$ consider the twisted restriction sequence
 \begin{equation}\label{eq:res} 0\to \OO_X((n-1)D)\to \OO_X(nD)\to \OO_D(nD)\to 0\end{equation}
 and denote by  $d_n\colon H^0(\OO_D(nD))\to H^1(\OO_X((n-1)D))$ the induced maps in cohomology.

 We  use Dolbeault cohomology.
 We consider the
sheaf $\cC^\infty(L)$ of $C^{\infty}$ sections of the line bundle $L$ and we set  $\cC^\infty(L)_D: =\cC^\infty(L)/\cC^\infty$; in addition, we denote by $\cA^{0,q}$, $\cA^{0,q}(L)$ the sheaves  of $C^{\infty}$ 
$(0,q)$-forms, of $C^{\infty}$  $(0,q)$-forms  with values in $L$.
We have a commutative  diagram 

\begin{gather*}
    \begin{diagram}
 \node{\cO_{S}} \arrow{e,t}{s}
      \arrow{s,r}{}
      \node {L}
\arrow{s,r}{} 
      \arrow{e}{} \node{L|_D} \arrow{s,r}{} 
 \\
 \node{\cC^\infty} \arrow{e,t}{}
      \arrow{s,r}{\over \partial}
      \node {\cC^\infty(L)} \arrow{s,r}{\over \partial} 
      \arrow{e}{} \node{\cC^\infty(L)_D}
 \\
 \node{\cA^{0,1}} \arrow {e,t}{}
      \node{\cA^{0,1}(L)} {}
  \end{diagram}
  \end{gather*}
 \begin{lem}
 \label{lem:dolbeault} Let $k\ge 1$ be an integer, let $\si\in H^0(L|_D)$ and set $v:=d_1\si$, let $\psi\in \cC^{\infty}(L)$ be a lift of $\si$ and let $\zeta\in \cC^{\infty}(L)$. 
 
  If $v$ is $1$-transversal to $L$ and   $\psi^k\debar \zeta\in \cA^{0,1}((k+1)L)$ is $\debar$-closed, 
 then $\psi^k\debar \zeta$ is $\debar$-exact.
 \end{lem}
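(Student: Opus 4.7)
The plan is to work in Dolbeault cohomology. Since $\psi$ lifts $\sigma$, the form $\debar\psi \in \cA^{0,1}(L)$ maps to zero in the quotient $\cA^{0,1}(L)/s\cA^{0,1}$, so one can write $\debar\psi = s\alpha$ for some $\alpha \in \cA^{0,1}$; the form $\alpha$ is automatically $\debar$-closed (apply $\debar$ to $\debar\psi = s\alpha$ and use $\debar s = 0$) and its class in $H^1(\OO_X)$ is precisely $v = d_1\sigma$. Unpacking the closedness of $\psi^k\debar\zeta$ via Leibniz, $\debar(\psi^k\debar\zeta) = ks\psi^{k-1}\alpha\wedge\debar\zeta = 0$ forces $\psi^{k-1}\alpha\wedge\debar\zeta = 0$ as a smooth form on $X$. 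A second Leibniz expansion
$$\psi^k\debar\zeta \;=\; \debar(\psi^k\zeta) - k\zeta\psi^{k-1}\debar\psi \;=\; \debar(\psi^k\zeta) - k s\zeta\psi^{k-1}\alpha$$
reduces the lemma to proving that $s\zeta\psi^{k-1}\alpha \in \cA^{0,1}(L^{k+1})$ is $\debar$-exact.

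For the base case $k=1$, the hypothesis becomes $\alpha\wedge\debar\zeta = 0$, which makes $\zeta\alpha \in \cA^{0,1}(L)$ be $\debar$-closed, and $v\cup[\zeta\alpha] = [\alpha\wedge\zeta\alpha] = 0 \in H^2(L)$. Here the $1$-transversality of $v$ to $L$ enters in the decisive way: it forces $[\zeta\alpha]$ to lie in $\mathrm{Im}(\cup v\colon H^0(L)\to H^1(L))$, so $\zeta\alpha = f\alpha + \debar\eta$ for some $f\in H^0(L)$ and $\eta\in\cC^\infty(L)$. Multiplying by $s$ and using $s\alpha = \debar\psi$ and $\debar s = 0$,
$$s\zeta\alpha \;=\; sf\alpha + s\debar\eta \;=\; f\debar\psi + \debar(s\eta) \;=\; \debar(f\psi + s\eta),$$
so $s\zeta\alpha$ and therefore $\psi\debar\zeta$ is $\debar$-exact.

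For $k\ge 2$ I would argue by induction on $k$, iterating the base-case construction. Applying $1$-transversality to the closed form $\psi\alpha\in\cA^{0,1}(L)$ (note $\debar(\psi\alpha)=s\alpha\wedge\alpha=0$ and $v\cup[\psi\alpha] = 0$) yields $h_1\in H^0(L)$ and $\lambda_1\in\cC^\infty(L)$ with $\psi\alpha = h_1\alpha + \debar\lambda_1$. One iterates: since $\debar\lambda_{j-1} = (\lambda_{j-2}-h_{j-1})\alpha$, each $\lambda_{j-1}\alpha$ is closed with $v\cup[\lambda_{j-1}\alpha]=0$, producing sequences $h_j\in H^0(L)$ and $\lambda_j\in\cC^\infty(L)$ satisfying $\lambda_{j-1}\alpha = h_j\alpha + \debar\lambda_j$ for all $j\ge 1$ (with $\lambda_0:=\psi$). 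These iterated primitives, together with a parallel chain obtained by repeatedly applying $1$-transversality to forms built from $\zeta$, can then be used to peel the factor $\psi^{k-1}$ off $s\zeta\psi^{k-1}\alpha$ one step at a time, reducing after $k-1$ iterations to an expression that lies in $\cA^{0,1}(L)$ and to which the base-case argument applies directly.

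The main obstacle is the bookkeeping in the inductive step. A naive expansion for $k=2$ that does not exploit the hypothesis $\psi\alpha\wedge\debar\zeta = 0$ in the correct place produces only the tautological identity $[\psi^2\debar\zeta]=[\psi^2\debar\zeta]$ modulo explicitly exact terms (traceable to the fact that $\psi^2/2 - h_1\psi - s\lambda_1$ is a holomorphic section of $L^2$); the hypothesis has to be inserted at the exact moment where it produces a $\debar$-closed class in $\cA^{0,1}(L)$ rather than in $\cA^{0,1}(L^{j})$ for some $j\ge 2$, so that $1$-transversality — which is available only for the line bundle $L$ — can again be invoked.
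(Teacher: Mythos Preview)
Your base case $k=1$ is correct and coincides with the paper's argument (modulo notation: what you call $\alpha$ the paper writes as $\debar\psi$, tacitly using that $\debar\psi$ is divisible by $s$). The gap is in the inductive step, where you openly admit the bookkeeping is unresolved; the iterated chain $\lambda_j,h_j$ you build from $\psi\alpha$ does not obviously interact with $\zeta$, and the ``parallel chain built from $\zeta$'' is never constructed. As you yourself note, a direct expansion collapses to a tautology.

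The missing idea is that the closedness hypothesis gives you more than $\psi^{k-1}\alpha\wedge\debar\zeta=0$: it gives $\alpha\wedge\debar\zeta=0$ on all of $X$, for every $k$. Indeed, from $\psi^{k-1}\debar\psi\wedge\debar\zeta=0$ you may divide by $\psi^{k-1}$ on the open set $\{\psi\ne 0\}$ to get $\debar\psi\wedge\debar\zeta=0$ there, hence on $\overline{\{\psi\ne 0\}}=\operatorname{supp}\psi$ by continuity; and off $\operatorname{supp}\psi$ one has $\psi\equiv 0$, so $\debar\psi=0$ and again $\debar\psi\wedge\debar\zeta=0$. Dividing by $s$ as you do, $\alpha\wedge\debar\zeta=0$ everywhere.

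Once you know this, your own base-case computation applies verbatim for every $k$: $\zeta\alpha$ is $\debar$-closed with $v\cup[\zeta\alpha]=0$, so $1$-transversality gives $\zeta\alpha=g\alpha+\debar\eta$ with $g\in H^0(L)$, $\eta\in\cC^\infty(L)$. Substituting into your Leibniz identity and using $k\psi^{k-1}g\,\debar\psi=\debar(g\psi^k)$ yields
\[
\psi^k\debar\zeta \;=\; \debar\bigl(\psi^k(\zeta-g)\bigr)\;-\;k\,\psi^{k-1}\debar\eta,
\]
so the problem reduces to showing $\psi^{k-1}\debar\eta$ is exact. But this form is $\debar$-closed (difference of a closed and an exact form), so the case $k-1$ of the lemma, applied with $\zeta$ replaced by $\eta$, finishes the induction. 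This is exactly the paper's route; the towers $h_j,\lambda_j$ are unnecessary.
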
 
 \begin{proof}
 Since $0=\debar(\psi^k\debar \zeta)=k\psi^{k-1}\debar\psi\wedge \debar \zeta$, we have $\debar \psi\wedge \debar \zeta=0$ on the support of $\psi$. On the other hand, $\debar \psi=0$ outside the support of $\psi$, hence $\debar \psi\wedge \debar \zeta=0$ on all of $X$ and therefore $\zeta\debar \psi\in \cA^{0,1}(L)$ is closed. 
Since the class $v\in H^1(\OO_X)$ is represented  by $\debar \psi\in \cA^{0,1}$ and  $\debar \psi\wedge \zeta\debar \psi=0$, we have  $v\cup [\zeta\debar \psi]=0$. By   
  the assumption that $v$ is $1$-transversal to $L$,  there exists $g\in H^0(L)$ and $\alpha\in \cC^{\infty}(L)$ such that $$\zeta\debar \psi=g\debar \psi+\debar \alpha=\debar(g\psi+\alpha).$$
 Then we have 
 $$\psi^k\debar \zeta=\debar(\psi^k\zeta)-k\psi^{k-1}\zeta\debar \psi=\debar(\psi^k\zeta)-k\psi^{k-1}\debar(g\psi+\alpha).$$
   Since $k\psi^{k-1}\debar(g\psi)=\debar(\psi^kg)$, we get:
\begin{equation}\label{eq:bordo}
\psi^k\debar \zeta=\debar(\psi^k(\zeta-g))-k\psi^{k-1}\debar \alpha.
\end{equation}
For $k=1$, \eqref{eq:bordo} is precisely the claim,   and for $k>1$ it  gives the inductive step.
 \end{proof}
  
 \begin{lem} \label{lem:obstruction}
  Let $\si\in H^0(\OO_D(D))$ and set $v:=d_1\si$.  Then: 
  \begin{enumerate}
  \item $v\cup d_2\si^2=0$,
  \item if  $\xi\in H^0(L)$ is such  that $d_2\si^2=v\cup 2 \xi$, then $d_2(\si-r_1(\xi))^2=0$, where $r_1\colon H^0(L)\to H^0(\OO_D(D))$ is the restriction map,
  \item if $d_2\si^2=0$ and $v$ is $1$-transversal to $L$, then $d_n\si^n=0$ for every $n\ge 2$.
  \end{enumerate}
 \end{lem}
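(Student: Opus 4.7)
The plan is to work throughout in Dolbeault cohomology, using the lift $\psi\in\cC^\infty(L)$ of $\si$ fixed in the diagram preceding Lemma~\ref{lem:dolbeault}. Since $\si$ is holomorphic on $D$, the form $\debar\psi\in\cA^{0,1}(L)$ is divisible by $s$, and the resulting scalar form (which we continue to denote $\debar\psi\in\cA^{0,1}$) represents $v=d_1\si$. More generally, $\psi^n\in\cC^\infty(L^n)$ is a lift of $\si^n\in H^0(\OO_D(nD))$, and, after dividing by $s$, the Dolbeault representative of $d_n\si^n\in H^1(L^{n-1})$ is $n\psi^{n-1}\debar\psi\in\cA^{0,1}(L^{n-1})$. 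With these representatives in hand each assertion becomes a concrete differential-form calculation.

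For (i), the cup product $v\cup d_2\si^2$ is represented by $\debar\psi\wedge(2\psi\debar\psi)=2\psi(\debar\psi\wedge\debar\psi)=0$, because $\debar\psi$ is a scalar $(0,1)$-form and hence anticommutes with itself. For (ii), I would expand $(\si-r_1(\xi))^2=\si^2-2\si\cdot r_1(\xi)+r_1(\xi)^2$ and apply $d_2$ to each term separately. The last term vanishes because $\xi^2\in H^0(L^2)$ is a global holomorphic lift of $r_1(\xi)^2$; for the cross term, the lift $\psi\xi\in\cC^\infty(L^2)$ gives $\debar(\psi\xi)=\xi\debar\psi$, which is exactly the Dolbeault representative of $v\cup\xi\in H^1(L)$, so $d_2(\si\cdot r_1(\xi))=v\cup\xi$. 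Substituting the hypothesis $d_2\si^2=v\cup 2\xi$ then cancels the whole expression.

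The heart of the lemma is (iii), and this is the step I expect to be the main obstacle because it requires spotting an algebraic reshaping that activates Lemma~\ref{lem:dolbeault}. The vanishing $d_2\si^2=0$ means precisely that $2\psi\debar\psi=\debar\al$ for some $\al\in\cC^\infty(L)$, so for $n\ge 3$ the representative of $d_n\si^n$ can be rewritten as
\[
n\psi^{n-1}\debar\psi=\tfrac{n}{2}\,\psi^{n-2}\debar\al,
\]
which has exactly the shape $\psi^k\debar\zeta$ required by Lemma~\ref{lem:dolbeault}. A quick check using $\debar\psi\wedge\debar\psi=0$ shows that $\psi^{n-2}\debar\al$ is $\debar$-closed, so Lemma~\ref{lem:dolbeault} applied with $k=n-2\ge 1$, $\zeta=\al$, together with the $1$-transversality of $v$, forces $\psi^{n-2}\debar\al$ to be $\debar$-exact; hence $d_n\si^n=0$ for every $n\ge 3$, while the case $n=2$ is nothing but the hypothesis.
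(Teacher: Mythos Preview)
Your argument is correct and follows the same Dolbeault-cohomology route as the paper: you use the same lift $\psi$ of $\si$ and the identity $\debar\psi\wedge\debar\psi=0$ for (i), an equivalent rewriting of $2(\psi-\xi)\debar\psi$ for (ii) (the paper packages this slightly more compactly by taking $\psi-\xi$ as the lift of $\si-r_1(\xi)$ rather than expanding the square), and the same reduction to Lemma~\ref{lem:dolbeault} with $k=n-2$ for (iii). Your explicit verification that $\psi^{n-2}\debar\al$ is $\debar$-closed and your separate handling of the case $n=2$ are minor clarifications that the paper leaves implicit.
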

 \begin{proof}
(i)  We regard $\si$ as a global section of  $\cC^\infty(L)_D$ and we lift it to a global section
$\psi\in \cC^\infty(L)$. Then ${\bar\partial} \psi $ represents  the  class  $v=d_1\si \in H^1(\cO_X)$ and
 $\bar{\partial} \psi^2=2\psi{\bar\partial} \psi $ represents the class $d_2\si^2$.
But then (i) follows,  since $$v\cup d_2\si^2=[ 2\psi\bar\partial {\psi}\wedge\bar\partial\psi]=0.$$

(ii) The function $\psi-\xi$ is a $\cC^{\infty}$ lift of $\si-r_1(\xi)$, hence $d_2(\si-r_1(\xi))^2$ is represented by $2(\psi-\xi)\debar \psi$. Since, as shown in the proof of (i),  $\debar\psi$ represents $v$ and $2\psi\debar \psi$ represents $d_2\si^2$, we have  $[2(\psi-\xi)\debar \psi]=d_2\si^2 -v\cup 2\xi=0$. 
\smallskip

(iii) Since $d_2\si^2=0$, there exists  $\zeta\in \cC^{\infty}(L)$  such that $\psi\debar \psi=\debar \zeta$.
For  $n\ge 2$ the class $d_n\si^n$ is represented by $\debar(\psi^n)=n\psi^{n-1}\debar \psi = n\psi^{n-2}\debar \zeta$, hence it vanishes   by Lemma \ref{lem:dolbeault} since $v$ is $1$-transversal to $L$.
 \end{proof}
\begin{proof}[Proof of Prop.  \ref{prop:lift}] Denote by $D$ the divisor of zeros of $s$ and  for $n\ge 1$  let $r_n \colon H^0(\OO_X(nD))\to H^0(\OO_D(nD))$ be the restriction map. 
Let $\{U_i\}$ be an   open cover of $X$ consisting of polydiscs, denote by $g_{ij}\in \OO_X^*(U_{ij})$  a system of transition functions for $L=\OO_X(D)$, by $s_i\in \OO_X(U_i)$ local representations of $s$  and by $a_{ij}\in \OO_X(U_{ij})$ a cocycle that represents $v$.  Set $V_i:=U_i\times \Spec R$;   transition functions for $L_v(t)$ with respect to the open cover $\{V_i\}$ of $X\times \Spec R$ are given by : 
  \begin{equation}
g_{ij}(t)=g_{ij}\exp(ta_{ij})=g_{ij}(1+a_{ij}t+\frac{a_{ij}^2t^2}{2}+\dots +\frac{a_{ij}^nt^n}{n!}+\dots ), 
 \end{equation}
We  look for  functions  $s_i(t)\in  \OO_{X\times \Spec R}(V_i)$ that satisfy:
\begin{equation}\label{eq:section} 
s_i(t)=g_{ij}(t) s_j(t).
\end{equation}
We write formally  $s_i(t)=s_i\exp(\si_it+\sum_{r\ge 2}\tau_i^{(r)}t^r)$, with $\si_i$,  $\tau_i^{(r)}$ meromorphic functions on $U_i$ with poles only on $D$,   and we start by solving \eqref{eq:section} $\mod t^2$, namely we look for functions  $\si_i\in \OO_X(D)(U_i)$ that on $U_{ij}$ satisfy 
\begin{equation}\label{eq:sec1}
\si_i=a_{ij}+\si_j.
\end{equation}
Equivalently, we look for a section $\si\in H^0(\OO_D(D))$ such that $d_1\si=v$.
Consider the long cohomology sequence associated with the standard restriction sequence on for $D$:
$$\dots \to H^0(\OO_X(D))\overset{r_1}{\lra} H^0(\OO_D(D))\overset{d_1}{\lra}  H^1(\OO_X)\overset{\cup s}{\lra} H^1(\OO_X(D))\to \dots $$
Since $s\cup v=0$ by assumption,  equation \eqref{eq:sec1} can be solved, and any two solutions differ by a section of $H^0(\OO_X(D))$.
The second step is to solve for every $i,j$:
\begin{equation}\label{eq:sec2}
 \tau\up2_i=\tau\up2_j \ \mbox{on} \ U_{ij}, \quad \tau_i\up2+\frac{\si_i^2}{2} \in \OO_X(D)(U_i).
\end{equation} 
Namely, we look for a section $\tau\up2\in H^0(\OO_X(2D))$ that lifts the section  $-\frac{\si^2}{2}\in H^0(\OO_D(2D))$. Hence we must have  $d_2\si^2=0$. By Lemma \ref{lem:obstruction} (i), $v\cup d_2\si^2=0$, hence by the assumption that $v$ is transversal to $L$ there exists $\xi\in H^0(\OO_X(D))$ such that $d_2\si^2=v\cup 2\xi$. 
Hence we have $d_1(\si-r_1(\xi))=d_1\si=v$ and by Lemma \ref{lem:obstruction} (ii)  we have  $d_2(\si-r_1(\xi))^2=0$.

Therefore we may replace $\si$ by $\si-r_1(\xi)$ and then  solve \eqref{eq:sec2}.
The third step consists in solving for every $i,j$:
\begin{equation}\label{eq:sec3}
 \tau\up3_i=\tau\up3_j \ \mbox{on} \ U_{ij}, \quad \tau_i\up3+\si_i \tau_i\up2+\frac{\si_i^3}{3} \in \OO_X(D)(U_i).
\end{equation} 
As in the previous step, \eqref{eq:sec3} has a solution $\tau\up3\in H^0(\OO_X(3D))$ iff:
\begin{equation}\label{eq:ob3}
d_3(3\si r_2(\tau\up2) +
\si^3)=0.
\end{equation} 
 Since by construction $r_2(\tau\up2)$ is a nonzero multiple of $\si^2$, this is equivalent to $d_3\si^3=0$. Hence \eqref{eq:sec3} can be solved by Lemma \ref{lem:obstruction} (iii).
By the same argument, one shows inductively the existence of $\tau\up{r}$ for every $r\ge 4$.
\end{proof}

\section{Paracanonical systems}\label{sec:paracanonical}
\subsection{Proof of Theorem \ref{thm:para3}}

We let $X$ be a smooth projective variety with $\albdim X=\dim X=n\ge 2$; for the sake of brevity,  we write $p_g, q, \chi$ instead of $p_g(X), q(X), \chi(K_X)$. We assume that $\chi>0$, so that we can consider  the main paracanonical system.

We write $\pp:=\pp(H^1(\OO_X))$ and  define an incidence subvariety $\cI\subset \pp\times |K_X|$ as follows:
$$\cI:=\{([v],[s])\ | \ v\cup s=0\}.$$
Let  $\cI_{main}\subseteq \cI$ be the closure of the open subset consisting of pairs $([v],[s])$ such that $v$ is $1$-transversal to $K_X$.   We consider  the two projections of $\pp\times |K_X|$ and we set $\Si\subseteq |K_X|$ and respectively $\Si_{main}\subseteq |K_X|$,  the image of $\cI$, respectively $\cI_{main}$, via the second projection. Hence $\Si$ is the locus  of canonical divisors that deform to first order in some non zero  direction $v$, and $\Si_{main}$ is the closure of the locus of canonical divisors that deform to first order in some direction $v$ that is $1$-transversal to $K_X$.
The key observation is the following:
\begin{lem}\label{lem:key} One has inclusions:
$$\Sigma_{main}\subseteq \cP_{main}\cap |K_X|\subseteq |K_X|\cap \overline{(\cP\setminus |K_X|)}\subseteq \Sigma.$$
\end{lem}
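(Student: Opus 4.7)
The plan is to establish each of the three inclusions separately; the substantive step is the leftmost one, while the other two are essentially formal.

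For the rightmost inclusion $|K_X|\cap\overline{\cP\setminus |K_X|}\subseteq\Sigma$, the approach is the curve selection lemma. Given $[s]\in|K_X|\cap\overline{\cP\setminus |K_X|}$, I would pick an analytic arc $\gamma(t)$ in $\cP$ defined on a small disk around $0$, with $\gamma(0)=\mathrm{div}(s)$ and $\gamma(t)\notin|K_X|$ for $t\ne 0$. Its image $L(t):=c_\kappa(\gamma(t))$ in $\Pic^\kappa(X)$ satisfies $L(0)=K_X$ and $L(t)\ne K_X$ for small $t\ne 0$; let $v\in H^1(\OO_X)$ be the first non-vanishing Taylor coefficient of $L(t)-K_X$, so $v\ne 0$. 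The section $s$ extends along the arc to a family $s(t)\in H^0(L(t))$, and a standard \v{C}ech cohomology computation (the order-by-order set-up of the proof of Proposition \ref{prop:lift}, read in reverse) shows that the obstruction to such an extension at the first non-trivial order is exactly $s\cup v\in H^1(K_X)$; the existence of the extension therefore forces $s\cup v=0$, whence $([v],[s])\in\cI$ and $[s]\in\Sigma$.

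The middle inclusion $\cP_{main}\cap|K_X|\subseteq|K_X|\cap\overline{\cP\setminus|K_X|}$ is a density statement. Since $\cP_{main}$ dominates $\Pic^\kappa(X)$, which has positive dimension because $q\ge n\ge 2$, while $|K_X|$ sits over the single point $K_X$, we have $\cP_{main}\not\subseteq|K_X|$; as $\cP_{main}$ is irreducible, the open subset $\cP_{main}\setminus|K_X|$ is dense in $\cP_{main}$, so $\cP_{main}\cap|K_X|\subseteq\cP_{main}=\overline{\cP_{main}\setminus|K_X|}\subseteq\overline{\cP\setminus|K_X|}$, and the inclusion into $|K_X|$ is automatic.

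The leftmost inclusion $\Sigma_{main}\subseteq\cP_{main}\cap|K_X|$ combines Theorem \ref{thm:limit}(i) with the generic-vanishing stratification of $\Pic^0(X)$. On the dense open locus of $\cI_{main}$ where $v$ is $1$-transversal to $K_X$, Theorem \ref{thm:limit}(i) supplies an analytic deformation $(\tilde s(t),\tilde L(t))$ of $(s,K_X)$ with $\tilde L(t)$ the straight-line deformation of $K_X$ in the direction $v$. Proposition \ref{prop:derivative}(i) tells us that $v$ is not tangent to $V_1(X)$ at $0$, so $\tilde L(t)\notin K_X+V_1(X)$ for small $t\ne 0$; combined with the chain $V_1(X)\supseteq V_2(X)\supseteq\cdots$ from Theorem \ref{thm:gv-summary}(ii) (applicable because $X$ has maximal Albanese dimension), this yields $\tilde L(t)\notin K_X+V_k(X)$ for every $k\ge 1$. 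The semicontinuity argument used in the proof of Theorem \ref{thm:limit}(ii) shows that every component of $\cP$ distinct from $\cP_{main}$ maps under $c_\kappa$ into $K_X+V_k(X)$ for some $k\ge 1$, so $\mathrm{div}(\tilde s(t))$ must lie in $\cP_{main}$ for small $t\ne 0$. Letting $t\to 0$ places $\mathrm{div}(s)$ in the closed set $\cP_{main}\cap|K_X|$ for $[s]$ in the dense open part of $\Sigma_{main}$, and taking closures finishes. The main obstacle is exactly this leftmost inclusion: unlike the right-hand analysis through first-order obstructions, here one must rule out that the analytic deformation from Theorem \ref{thm:limit}(i) drifts into a spurious component of $\cP$, and this is handled precisely by the interplay of Proposition \ref{prop:derivative}(i) with Theorem \ref{thm:gv-summary}(ii).
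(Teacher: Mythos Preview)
Your argument is correct, but it diverges from the paper's in two of the three steps.

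For the leftmost inclusion, the paper observes that Theorem~\ref{thm:cast-gl} gives, for $L=K_X$ on a variety of maximal Albanese dimension, the equivalence ``$v$ is $1$-transversal'' $\Leftrightarrow$ ``$v$ is $k$-transversal for every $k>0$''.  This lets one invoke Theorem~\ref{thm:limit}(ii) directly and conclude $D\in\cP_{main}$ in one line.  You instead apply only Theorem~\ref{thm:limit}(i) and then use the chain $V_1(X)\supseteq V_2(X)\supseteq\cdots$ from Theorem~\ref{thm:gv-summary}(ii): since $v$ is not tangent to $V_1$, the arc leaves $V_1$ and a fortiori every $V_k$.  Both routes are valid; yours replaces the Castelnuovo--de~Franchis type input (Theorem~\ref{thm:cast-gl}) by the Ein--Lazarsfeld nesting of the $V_k$.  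One small inaccuracy: the analytic $\tilde L(t)$ produced via Artin approximation need only agree with the straight-line deformation $\bmod\ t^2$, not equal it; this is harmless, since only the tangent direction matters for your argument.

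For the rightmost inclusion, the paper argues contrapositively by a tangent-space comparison: if $[s]\notin\Sigma$ then $\cup s$ is injective, the restriction map $H^0(\cO_X(D))\to H^0(\cO_D(D))$ is surjective, so $T_D\cP=T_D|K_X|$; hence $\cP$ is smooth at $D$ and $|K_X|$ is the unique component of $\cP$ through $D$.  Your curve-selection approach also works (and your computation that the first nonvanishing Taylor coefficient $v$ of $L(t)-K_X$ satisfies $s\cup v=0$ is correct), but the paper's method is shorter and yields the stronger byproduct that $\cP$ is actually smooth at such points, with $|K_X|$ exorbitant there.
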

\begin{proof}
By Theorem \ref{thm:cast-gl}, a vector $v\in H^1(\OO_X)$ is $1$-transversal to $K_X$ if and only if it is $k$-transversal to $K_X$ for every $k>0$. 
Hence the first inclusion is a consequence of Theorem \ref{thm:limit}. The second one is obvious. 

To prove the last one, consider   $D=(s)\notin \Sigma$; the sequence $$0\to H^0(\OO_S)\to H^0(\OO_S(D))\to H^0(\OO_D(D))\to 0$$ is exact. Since $H^0(\OO_D(D))$ is the tangent space to $\cP$ at the point $D$, this means that $\cP$ and 
 $|K_X|$ have the same tangent space at $D$, hence $\cP$ is smooth at  $D$, $|K_X|$ is exorbitant and $D\notin \cP_{main}$.  \end{proof}
Now  we start to study the geometry of the varieties that we have introduced:
 \begin{lem}\label{lem:count}
\begin{enumerate}
\item $\cI_{main}$ is irreducible of dimension $\chi+q-2$;
\item if $0\in V_k(X)$ is an isolated point for $k>0$, then $ |K_X|\cap \overline{(\cP\setminus |K_X|)}=\Sigma_{main}$.
\end{enumerate}
\end{lem}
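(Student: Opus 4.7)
The plan is to analyse $\cI_{main}\subseteq \pp\times |K_X|$ through its two projections.

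For part (i), consider the first projection $\pi_1\colon \cI\to\pp$, with fiber over $[v]$ equal to $\pp(\ker(\cup v\colon H^0(K_X)\to H^1(K_X)))$. Let $U\subseteq \pp$ be the (assumed non-empty) open locus of $1$-transversal directions. By the remark made in the proof of Lemma~\ref{lem:key}---that, via Theorem~\ref{thm:cast-gl}, $1$-transversality to $K_X$ is equivalent to $k$-transversality for every $k>0$---the whole derivative complex of $K_X$ is exact at every $v\in U$. An Euler characteristic calculation in
$$0\to \ker(\cup v)\to H^0(K_X)\overset{\cup v}{\lra}H^1(K_X)\overset{\cup v}{\lra}\cdots\overset{\cup v}{\lra}H^n(K_X)\to 0$$
then yields $\dim\ker(\cup v)=\chi$. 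Hence $\pi_1^{-1}(U)$ is irreducible of dimension $(q-1)+(\chi-1)=\chi+q-2$, and so is its closure $\cI_{main}$.

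For part (ii), Lemma~\ref{lem:key} already gives $\Sigma_{main}\subseteq |K_X|\cap\overline{\cP\setminus |K_X|}$, so one needs the reverse inclusion. I would split this in two stages. First, I would show the equality $|K_X|\cap\overline{\cP\setminus |K_X|}=|K_X|\cap\cP_{main}$ under the isolated hypothesis: any component $Z$ of $\cP$ different from $\cP_{main}$ has generic $L\in c_{\kappa}(Z)$ with $h^0(L)>\chi$, which by Theorem~\ref{thm:gv-summary} forces $c_{\kappa}(Z)\subseteq K_X+V_k(X)$ for some $k>0$. Because $0$ is isolated in each such $V_k$, an extra component $Z$ meeting $|K_X|$ must map to $\{0\}$, lie inside $|K_X|$, and therefore equal $|K_X|$.

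Second, I would establish $|K_X|\cap\cP_{main}\subseteq \Sigma_{main}$. For a general point $D=(s)$ in an irreducible component of $|K_X|\cap\cP_{main}$, any analytic arc in $\cP_{main}$ through $D$ that escapes $|K_X|$ furnishes a nonzero $v\in H^1(\OO_X)$ with $v\cup s=0$. By Proposition~\ref{prop:derivative}(ii) applied at the isolated component $\{0\}$ of $V_1(X)$, such a $v$ is either $1$-transversal to $K_X$ or lies in the closed locus where both $\cup v|_{H^0(K_X)}$ and $\cup v|_{H^1(K_X)}$ vanish; genericity of $D$ would rule out the latter. Hence $([v],[s])\in \cI_{main}^{\circ}$ and $D\in \Sigma_{main}$; closedness of $\Sigma_{main}=\pi_2(\cI_{main})$ then extends this to the whole intersection.

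The main obstacle is the second stage: the points of $|K_X|\cap \cP_{main}$ are precisely the points of $\cP_{main}$ lying above the cohomology jump $0\in\Pic^0(X)$, so $dc_{\kappa}$ can degenerate there. The delicate part is certifying that, at a generic such $D$, the deformation directions that arise do not all collapse into the ``both-vanish'' locus of Proposition~\ref{prop:derivative}(ii); the isolated hypothesis on the $V_k(X)$ is exactly what prevents a positive-dimensional family of obstructive directions.
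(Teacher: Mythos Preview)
Your argument for (i) is correct and matches the paper's.

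For (ii) you are working much harder than necessary, and the gap you flag in your final paragraph is real as stated. The paper's proof bypasses all of this with a single observation: the isolated hypothesis forces \emph{every} nonzero $v\in H^1(\OO_X)$ to be $k$-transversal to $K_X$ for all $k>0$. Indeed, since $\{0\}$ is then an entire component of $V_k(X)$, the point $0$ is general in it and Proposition~\ref{prop:derivative} applies to every $v$; the paper reads off the equivalence ``$0$ isolated in $V_k(X)$ for all $k>0$ $\Leftrightarrow$ every $v$ is $k$-transversal for all $k>0$'' directly from that proposition. In particular every $v$ is $1$-transversal, so $\cI=\cI_{main}$, hence $\Sigma=\Sigma_{main}$, and the chain of inclusions in Lemma~\ref{lem:key} collapses to the desired equality.

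Your ``genericity of $D$'' step does not work on its own terms: if some $v_0$ lay in the ``both vanish'' locus then $v_0\cup s=0$ for \emph{every} $s\in H^0(K_X)$, so no genericity condition on $D\in|K_X|$ can prevent $v_0$ from arising as a deformation direction. What actually rescues the situation is that this locus is empty under the hypothesis---precisely the observation above---so there is nothing to avoid. Once you have $\Sigma=\Sigma_{main}$, your first stage (which is correct) becomes superfluous, since Lemma~\ref{lem:key} already sandwiches $|K_X|\cap\overline{\cP\setminus|K_X|}$ between $\Sigma_{main}$ and $\Sigma$.
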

\begin{proof}
Denote by $\pr_i$, $i=1,2$, the restrictions to $\cI$, $\cI_{main}$ of the two projections of $\pp\times |K_X|$.

(i) By Theorem \ref{thm:cast-gl}, a vector $v\in H^1(\OO_X)$ is $1$-transversal to $K_X$ if and only if it is $k$-transversal to $K_X$ for every $k>0$. Hence, if  $v\in H^1(\OO_X)$ is $1$-transversal to $K_X$, then $\pr_1\inv ([v])$ is a projective space of dimension $\chi-1$. Since by Theorem \ref{thm:cast-gl}, the classes $[v]$ such that  $v$ is $1$-transversal to $K_X$ form a nonempty open subset of $\pp(H^1(\OO_X))$, the claim follows immediately.
\smallskip

(ii) By Proposition \ref{prop:derivative},  the point $0\in V_k(X)$ is  isolated for $k>0$ if and only if every $0\ne v\in H^1(\OO_X)$ is $k$-transversal to $K_X$ for $k>0$, hence, as explained  in the proof of (i),  if and only if every $0\ne v\in H^1(\OO_X)$ is $1$-transversal to $K_X$. Therefore   we have $\cI=\cI_{main}$, and the claim follows by Lemma \ref{lem:key}.
\end{proof}

Next we  need  a linear algebra result, which is an infinitesimal version of   the so-called ker/coker lemma:
\begin{lem}\label{lem:kercoker}
Let $V,W$ be complex vector spaces, let $X\subset \Hom(V,W)$ be an irreducible closed subset  and let $f\in X$ be general.

  If $g$ is tangent to $X$ at the point $f$,  then   $g(\ker(f))\subseteq \im f$. \end{lem}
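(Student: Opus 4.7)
The plan is to realize the abstract tangent vector $g$ by an analytic arc in $X$, and then differentiate the relation ``$v$ lies in the kernel'' along this arc. Since the rank function on $\Hom(V,W)$ is lower semicontinuous and $X$ is irreducible, there is a Zariski open subset of $X$ on which $\rk$ attains its generic value $r:=\rk(f)$; up to replacing $X$ by this open subset we may assume that $f$ is a smooth point of $X$ and that every element of $X$ has rank exactly $r$.

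First, since $f$ is smooth on $X$ and $g$ lies in the Zariski tangent space $T_fX$, there exists a holomorphic arc $\gamma\colon (\C,0)\to X$ with $\gamma(0)=f$ and $\gamma'(0)=g$; write $f(t):=\gamma(t)=f+tg+O(t^2)$. Because $\rk f(t)$ is constant equal to $r$ along the arc, the family of subspaces $\ker f(t)\subseteq V$ forms a holomorphic subbundle of the trivial bundle $V\times(\C,0)$. Hence, given $v\in\ker f$, we may choose a holomorphic section $v(t)\in\ker f(t)$ with $v(0)=v$, and expand $v(t)=v+tw+O(t^2)$ for some $w\in V$.

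Next I would differentiate the identity $f(t)v(t)=0$ at $t=0$. The order zero term reads $f(v)=0$, which is the hypothesis $v\in\ker f$. The order one term reads
\[
g(v)+f(w)=0,
\]
so $g(v)=-f(w)\in\im f$, as required.

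The only delicate step is justifying that $\ker f(t)$ varies holomorphically in $t$, and hence that the lift $v(t)$ exists; this is where the genericity of $f$ enters, via the constancy of the rank in a neighbourhood of $f$ in $X$. The passage from a formal Zariski tangent vector to an actual analytic arc is standard at a smooth point of an algebraic variety, and the rest of the argument is the infinitesimal analogue of the ordinary ker/coker lemma.
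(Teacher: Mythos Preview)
Your proof is correct and follows essentially the same route as the paper's: both pass to an analytic arc $f(t)=f+tg+\cdots$ through the smooth point $f$, use the constancy of the rank near $f$ to get a holomorphically varying kernel, lift $v\in\ker f$ to a section $v(t)\in\ker f(t)$, and read off $g(v)=-f(v'(0))\in\im f$ from the derivative of $f(t)v(t)=0$. The only cosmetic difference is that the paper phrases the holomorphic variation of the kernel as an analytic map to the Grassmannian $\bG(\dim\ker f,\,V)$ and lifts $v$ via the universal family there, which is exactly your ``holomorphic subbundle'' statement.
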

 \begin{proof}
 By the generality of $f$, we may assume that $X$ is smooth at $f$ and that $\rk f$ is equal to the maximum of $\rk h$ for $h\in X$; we set $r:=\dim \Ker f$. 
 Then there is an analytic map  $F\colon \Delta \to X$, where $\Delta$ is a small disk around $0\in \C$, such that $F(t)=f+ gt+\dots $; then $t\to U(t):=\ker F(t)$ defines an analytic map $\Delta\to \bG(r, V)$, where $\bG(r,V)$ is the Grassmannian. Given $u\in \ker f$, we can lift $t\to U(t)$ to the universal family on $\bG(r,V)$, namely there exists  an analytic map  $u\colon \Delta\to V$  such that $u(t)=u+u_1t+\dots$ and $F(t)u(t)=0$. Hence $0=(F(t)u(t))'|_{t=0}=fu_1+gu$, i.e.,  $gu=-fu_1\in \im f$.
 \end{proof}
 
 We can now complete the proof of Thm. \ref{thm:para3}:
\begin{proof}[Proof of Thm. \ref{thm:para3}]
Since $X$ has no Albanese general type    fibration and $q\ge n+1$, the  inequality $\chi(K_X)\ge q(X)-n$ holds by \cite{PareschiPopa}.
\medskip

(i) Consider the projection $\pr_2\colon \cI_{main}\to \Sigma_{main}\subseteq |K_X|$.   If for general $[s]\in \Sigma_{main}$ the fiber $\pr_2\inv ([s])$ is a point,
or, equivalently, if  the dimension $t$ of the kernel $N$ of the map $\cup s\colon H^1(\OO_X)\to H^1(K_X)$ is equal to 1, then  the inequality $\chi+q-1\le p_g$ follows by  Lemma \ref{lem:count}.  

So assume that  $t\ge 2$ and denote by $\pp(T)\subseteq |K_X|$ the projective tangent space to $\Sigma_{main}$ at a general  point $[s]$. Let $\al,\beta\in H^0(\Omega^1_X)$ be independent $1$-forms such that $\bar \alpha,  \bar \beta\in N$.
Given  $s'\in T\subset  H^0(K_X)$,  by Lemma \ref{lem:kercoker} there exists $\gamma \in H^0(\Omega^1_X)$ such that $s'\wedge \bar \alpha=s\wedge \bar \gamma$. Thus we have $s'\wedge \bar\alpha\wedge \bar\beta=s\wedge \bar \gamma \wedge \bar\beta=-s\wedge  \bar\beta\wedge \bar \gamma=0$, for every $\bar \alpha, \bar \beta\in N$. It follows that $T$ is annihilated by the image $S$ of the map $\wedge^2 N\otimes H^{n-2}(\OO_X)\to H^n(\OO_X)$. Denote by $\bG_N$ the subset of the Grassmannian $\bG(n, H^1(\OO_X))$ consisting of the $n$-dimensional vector subspaces  of $H^1(\OO_X)$ that intersect $N$ in dimension $\ge 2$. Since by assumption $X$ has no Albanese general type   fibration, by Theorem \ref{thm:cast-cat} the wedge product induces a finite map $\bG_N\to \pp(S)$.
Hence, we have:
$\codim \Sigma_{main}=\codim T\ge\dim S\ge  \dim \bG_N+1= (n-2)(q-n)+2(t-2)+1$ if $q\ge n+t -1$ and $\codim \Sigma_{main}\ge  n(q-n)+1$ if $q<n+t-1$.  Since $\dim\Sigma_{main}=\dim \cI_{main}-(t-1)=\chi+q-t$, this can be rewritten  as:
\begin{itemize}
\item[(a)]   $p_g-(\chi+q-t)\ge (n-2)(q-n)+2t-3$,  if $q\ge n+t-1$;
\item[(b)] $p_g-(\chi+q-t)\ge n(q-n)+1$,  if $q<n+t-1$.
\end{itemize}
In case (a) we obtain $p_g-(\chi+q-1)\ge (n-2)(q-n)>0$; in case (b), since $t\le q$,  we get $p_g-(\chi+q-1)\ge (n-1)(q-n)-n+ 2\ge (n-1)-n+2=1$.
\medskip

(ii) If $|K_X|\subset  \cP_{main}$, then $p_g\le \chi+q-1$ and therefore $p_g=\chi+q-1$ by (i). Conversely,   the proof of (i) shows that if $p_g=\chi+q-1$ we have $t=1$. Hence in this case $\dim\Sigma_{main}=\chi+q-2$ and therefore $\Sigma_{main}=|K_X|$ and 
 $|K_X|\subset \cP_{main}$ is not exorbitant. 
\medskip

(iii) By (ii), if $p_g=\chi+q-1$, then for general $s\in H^0(K_X)$   the kernel of $\cup s\colon H^1(\OO_X)\to H^1(K_X)$  is $1$-dimensional.
Let $s\in H^0(K_X)$ be general and take $0\ne \alpha\in H^0(\Omega^1_X)$   such that $s\wedge\bar\alpha=0$. 
We denote by $W$ the image of the map $H^0(K_X)\to H^1(K_X)$ defined by $w\mapsto w\wedge \bar \alpha$. By Lemma \ref{lem:kercoker}, for every  $w\in H^0(K_X)$  there exists $\beta \in H^0(\Omega^1_X)$ such that $w\wedge \bar \alpha =s\wedge \bar \beta$,  namely $W$ is contained in the  image of the map $\cup s\colon H^1(\OO_X)\to H^1(K_X)$. It follows that $W$ has dimension $\le q-1$. 

 Let 
$\bar{M}\subset H^1(\cO_X)$ be a subspace such that $<\bar{\alpha}>\oplus \bar{M}=H^1(\cO_X)$,  and consider 
$M\subset H^0(X,\Omega^1_X)$. Now take a  decomposable form 
$s_1=\beta_1\wedge...\wedge \beta_{n-1}\wedge \alpha$ with $\beta_i\in M$ independent. By the assumption that $X$ has no Albanese general type   fibration, we have $s_1\ne 0$,  hence $s_1\wedge \overline{s_1}\ne0$, and, a fortiori,    $s_1\wedge \bar {\alpha}\neq 0$. 
Hence the map $\bigwedge^{n-1}M\to W$ defined by $s\mapsto s\wedge\alpha\wedge \bar \alpha$ induces a finite morphism 
$\bG(n-1,M)\to \bP(W)$.  It follows that
$q-2\geq (n-1)(q-1-(n-1))=(n-1)(q-n)$, that is $(n-2)q\leq (n-2)(n+1)$  and thus $q\leq n+1$.

(iv) 
By (ii) the equality $p_g=\chi+q-1$ holds  if and only if $|K_X|\subset \cP_{main}$,   if and only if, since $0\in V_k(X)$ is isolated for $k>0$, $|K_X|$ is not exorbitant. In addition, by (iii) if $p_g(X)=\chi(X)+q-1$ then $q=n+1$.

 Hence, by \cite[Prop. 5.5]{bgg}, in our situation $|K_X|$ is not exorbitant if and only if the coefficient  $s_n$ of $t^n$ in the formal power series expansion in $\Z[[t]]$  of  the rational function $\Pi_{j=1}^n(1+jt)^{(-1)^{j+1}h^{0,n-j}}$ does not vanish. 
 
  As explained in \cite{bgg}, $s_n$ is the degree in $\pp(H^1(\OO_X))$ of a general fiber of the map $\cI_{main}\to \pp(H^0(K_X))$; since by the proof of (i) this fiber is either empty or a point, it is enough to compute $s_n$ modulo $2$. Finally, it is an elementary computation to show that $s_n\equiv {h(X)\choose n}\mod 2$.\end{proof}

\subsection{Proof of Theorem \ref{thm:para2}}  Let  $X$ be a surface  with irregularity $q\ge 2$ and  with  $\chi:=\chi(K_X)>0$ that has no irrational pencil of genus $>q/2$. Recall that  $p_g=\chi(K_X)+q-1$.

As explained in the introduction, part of the results that follow were already  proven in \cite{beauville-annulation}, but  in order to give a clear presentation of our results we prefer to give all the proofs. 
\begin{lem}\label{lem:dimension}
The irreducible components of $\cI$ distinct from $\cI_{main}$ have dimension $<p_g-1$.
\end{lem}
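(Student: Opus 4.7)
The plan is, for each irreducible component $\cI'\neq \cI_{main}$ of $\cI$, to prove $\dim \cI' \le p_g - 2$ by combining a constraint on the image $\pr_1(\cI')\subseteq\pp$ with an upper bound on the generic fiber dimension of $\pr_1|_{\cI'}$; since $\dim\cI_{main}=p_g-1$ by Lemma \ref{lem:count}(i), this suffices.

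First I would observe that the preimage in $\cI$ of the open (nonempty) $1$-transversal locus in $\pp$ is an irreducible $\pp^{\chi-1}$-bundle whose closure is precisely $\cI_{main}$; hence any other component $\cI'$ must have $\pr_1(\cI')$ contained in the non-$1$-transversal locus. By Theorem \ref{thm:cast-gl} applied with $n=2$, $k=1$, $\eta=0$, that locus equals the finite union $\bigcup_f\pp(N_f)$, where $f\colon X\to Y$ ranges over the irregular pencils of $X$ and $N_f:=f^*H^0(\Omega^1_Y)$. For pencils with $b:=g(Y)=1$ the map $\cup v\colon H^1(K_X)\to H^2(K_X)$ is surjective for every $v\ne 0$ (by Serre duality), and an easy count of ranks in the derivative complex then shows each such $v$ is in fact $1$-transversal; hence only pencils with $b\ge 2$ contribute, and by hypothesis they satisfy $b\le q/2$. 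Since $\cI'$ is irreducible, $\pr_1(\cI')\subseteq\pp(N_f)$ for some pencil $f$ with $2\le b\le q/2$, and in particular $\dim\pr_1(\cI')\le b-1$.

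For the fiber dimension I would use the Hodge-theoretic identification
\[
\ker\bigl(\cup v\colon H^0(K_X)\to H^1(K_X)\bigr)=\bigl(\bar v\wedge H^0(\Omega^1_X)\bigr)^{\perp},
\]
with orthogonality taken in the natural Hodge metric on $H^0(K_X)$; this gives $\dim\ker(\cup v)=\chi-1+\dim\ker(\wedge\bar v)$. For $[v]$ general in $\pp(N_f)$ the generalized Castelnuovo--de Franchis theorem (Theorem \ref{thm:cast-cat}) yields $\ker(\wedge\bar v)=N_f$, so the generic fiber of $\pr_1$ over $\pp(N_f)$ has dimension $\chi+b-2$. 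Combining with the previous paragraph,
\[
\dim\cI'\le(b-1)+(\chi+b-2)=\chi+2b-3\le\chi+q-3=p_g-2.
\]

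The main subtlety will be handling the special stratum of $\pp(N_f)$ on which $\ker(\wedge\bar v)$ jumps strictly beyond $N_f$: such a $\bar v$ lies in a larger pencil subspace $N_{f'}$, but the hypothesis that $X$ has no irrational pencil of genus $>q/2$ uniformly bounds $\dim\ker(\wedge\bar v)\le q/2$. A short stratification of $\pp(N_f)$ according to this jump value, combined with the identity above for $\dim\ker(\cup v)$, yields the same estimate $\dim\cI'\le p_g-2$ in every stratum, completing the argument.
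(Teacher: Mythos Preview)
Your proof is correct and follows the same strategy as the paper: identify the non-$1$-transversal locus via Castelnuovo--de Franchis as a finite union of linear subspaces $\pp(N_f)$ coming from irrational pencils of genus $b$ with $2\le b\le q/2$, bound the fibre of $\pr_1$ over each such subspace using the wedge map $\wedge\bar v$, and conclude $\dim\le p_g-q+2b-2<p_g-1$. Two points where the paper is cleaner: the classical Castelnuovo--de Franchis theorem already forces $b\ge 2$ (your separate $b=1$ argument is unnecessary, and as written the ``easy count of ranks'' does not follow from surjectivity of $\cup v\colon H^1(K_X)\to H^2(K_X)$ alone), and the paper notes that the subspaces $\pp(N_f)$ are mutually disjoint, so $\ker(\wedge\bar v)=N_f$ for \emph{every} $\bar v\in N_f$, the fibre dimension is constant, and your stratification is not needed.
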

\begin{proof} Let $v\in H^1(\OO_X)$ and let $\alpha=\bar v\in H^0(\Omega^1_X)$: by Hodge theory  $v$ is not $1$-transversal to $K_X$ if and only if the  sequence   $H^0(\OO_X)\overset{\wedge \alpha}{\to} H^0(\Omega^1_X)\overset{\wedge \alpha}{\to}  H^0(K_X)$ is not exact. 
Hence by  the classical Castelnuovo-de Franchis Theorem (cf. also Theorem  \ref{thm:cast-cat} and \ref{thm:cast-gl}), the set of classes $[v]\in \pp:=\pp(H^1(\OO_X)$ such that $v$ is not  $1$-transversal to $|K_X|$ is the union of the (finitely many) mutually disjoint  linear subspaces $\pp(\overline{f^*H^0(K_B)})$,  where $f\colon X\to  B$ is a pencil of genus $b>1$. 
Fix such a pencil $f$ and write $W:= f^*H^0(K_B)$.  Let $v=\bar{\alpha}$, with $\alpha \in W$, and  choose a subspace $N\subset H^0(\Omega^1_X)$ such that $H^0(\Omega^1_X)=W\oplus N$:   again by the Castelnuovo-de Franchis theorem, we have $\alpha\wedge \beta\ne 0$ for every $0\ne \beta \in N$. Therefore $\int_X\alpha\wedge \beta\wedge\bar\alpha\wedge \bar \beta\ne 0$, hence  $\alpha\wedge \beta\wedge\bar\alpha\ne 0\in H^1(K_X)$. This shows that
 the linear map $\cup v \colon H^0(K_X)\to H^1(K_X)$ has rank $\ge q-b$. Hence $\pr_1\inv([v])$ has dimension $\le p_g-q+b-1$ and the preimage in $\cI$  of $\pp(\overline{f^*H^0(K_B)})$ has dimension $\le p_g-q+2b-2<p_g-1$, by the assumption that $b\le q/2$.

Assume that $Z$ is an irreducible component of $\cI$ distinct from $\cI_{main}$ and consider the first projection $\pr_1\colon \cI\to \pp$: by the definition  of $\cI_{main}$,   $\pr_1(Z)$ is contained in $\pp(\overline{f^*H^0(K_B)})$ for some irrational pencil $f\colon X\to B$, and the claim follows by the previous discussion.
\end{proof}

Given a $2$-form $s\in H^0(K_X)$, by Serre duality the linear map $\cup s\colon H^1(\OO_X)\to H^1(K_X)\cong H^1(\OO_X)^{\vee}$ induces a skew-symmetric bilinear  form on $H^1(\OO_X)$ that we denote by $c_s$. Clearly, $\Sigma$ is the subset of classes $[s]$ such that  $c_s$ is degenerate. 

\begin{lem}\label{lem:qeven-odd}
 \begin{enumerate}
\item If $q$ is odd, then $\Sigma=\Sigma_{main}=|K_X|$
\item If $q$ is even, then $\Sigma=\Sigma_{main}$ is the zero set of a polynomial $\Pf$ of degree $q/2$.
\end{enumerate}
\end{lem}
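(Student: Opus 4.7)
The starting point is that the bilinear form $c_s$ on $H^1(\OO_X)$ is skew-symmetric and depends linearly on $s$. Skew-symmetry I would verify by observing that cup product $H^1(\OO_X)\otimes H^1(\OO_X)\to H^2(\OO_X)$ is anti-commutative (via Dolbeault: the wedge of two $(0,1)$-forms is), so under the Serre pairing $c_s(u,v)=\int_X s\cup u\cup v=-c_s(v,u)$; linearity in $s$ is immediate. Consequently, $\Sigma$ is exactly the degeneracy locus of $c_s$ on $|K_X|$.

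When $q$ is odd, every skew form on a $q$-dimensional space is degenerate, so $\Sigma=|K_X|$. When $q$ is even, the Pfaffian $\Pf(c_s)$ is well-defined and is a polynomial of degree $q/2$ in the entries of $c_s$, hence (by linearity) in $s$; its vanishing locus is $\Sigma$. To rule out $\Pf\equiv 0$, I would argue by contradiction: if every $c_s$ were degenerate then $\dim\ker c_s\ge 2$ throughout $|K_X|$, so every fiber of $\pr_2\colon\cI\to|K_X|$ would have dimension $\ge 1$, yielding $\dim\cI\ge p_g$; but Lemmas~\ref{lem:count}(i) and~\ref{lem:dimension} together bound every irreducible component of $\cI$ by $p_g-1$, a contradiction.

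To identify $\Sigma$ with $\Sigma_{main}$ I would use a uniform kernel-bundle argument. Fix an irreducible component $\Sigma'\subseteq\Sigma$, let $m$ be the generic value of $\dim\ker c_s$ on $\Sigma'$, and let $K_{\Sigma'}\subseteq\cI$ be the Zariski closure of the locus of $([v],[s])$ with $[s]\in\Sigma'$, $\dim\ker c_s=m$, and $[v]\in\pp(\ker c_s)$. This is irreducible (a generic $\pp^{m-1}$-bundle over the irreducible base $\Sigma'$) of dimension $\dim\Sigma'+m-1$. When $q$ is odd, $\dim\Sigma'=p_g-1$ and $m\ge 1$ is odd; when $q$ is even, $\dim\Sigma'=p_g-2$ (as $\Sigma$ is a hypersurface) and $m\ge 2$ is even. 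In both cases the containment $K_{\Sigma'}\subseteq\cI$ together with the dimension cap from Lemma~\ref{lem:dimension} forces the minimum possible $m$ (namely $1$, resp.\ $2$) and hence $\dim K_{\Sigma'}=p_g-1$. Since Lemma~\ref{lem:dimension} identifies $\cI_{main}$ as the unique irreducible component of $\cI$ of dimension $\ge p_g-1$, we conclude $K_{\Sigma'}\subseteq\cI_{main}$, hence $\Sigma'=\pr_2(K_{\Sigma'})\subseteq\Sigma_{main}$. Ranging over all components of $\Sigma$ yields $\Sigma\subseteq\Sigma_{main}$, the reverse inclusion being tautological.

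The main obstacle will be setting up $K_{\Sigma'}$ rigorously through the corank stratification of the Pfaffian, and pinning down its exact dimension in terms of $m$; once the dimension count is in place, the placement $K_{\Sigma'}\subseteq\cI_{main}$ is automatic from Lemma~\ref{lem:dimension}, so the rest of the argument is essentially bookkeeping.
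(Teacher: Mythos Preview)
Your proposal is correct and follows essentially the same strategy as the paper: both arguments hinge on the dimension bound from Lemma~\ref{lem:dimension} (together with $\dim\cI_{main}=\chi+q-2=p_g-1$ from Lemma~\ref{lem:count}) to force the relevant irreducible piece of $\cI$ sitting over a component of $\Sigma$ to coincide with $\cI_{main}$. The only difference is cosmetic packaging: the paper works directly with the full preimage $\pr_2^{-1}(\Delta)$ and observes that it must contain $\cI_{main}$ (in the odd case because $\cI_{main}$ is the only component large enough to dominate $|K_X|$, in the even case because every fiber over $\Delta$ has dimension $\ge 1$), whereas you isolate the irreducible ``kernel bundle'' $K_{\Sigma'}$ and compute its dimension via the generic corank $m$; your extra bookkeeping yields the bonus that $m$ is forced to its minimum value, but this is not needed for the lemma as stated.
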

\begin{proof} 
(i) If $q$ is odd, then $c_s$, being skew-symmetric,  is degenerate for every $s\in H^0(K_X)$, hence $\Si=|K_X|$. By Lemma \ref{lem:dimension}, $\cI_{main}$ is the only component of $\cI$ that can dominate $|K_X|$, hence we also have $\Sigma_{main}=|K_X|$.
\smallskip

(ii) For $q$ even,  $\Sigma$ is the  zero locus of the pull back $\Pf$  of the Pfaffian polynomial  in $\bigwedge^2 H^1(\OO_X)^{\vee}$, hence  it is either a divisor or it is equal to $|K_X|$. The latter possibility cannot occur by dimension reasons, since for every $[s]\in\Sigma$ the fiber $\pr_2\inv([s])$ is an odd dimensional linear space. So $\Sigma$ is a divisor. Let $\Delta\subseteq\Sigma$ be an irreducible component: then $\pr_2\inv(\Delta)$ has a component of dimension $\ge p_g-1$, hence by Lemma \ref{lem:dimension} $\pr_2\inv(\Delta)$ contains $\cI_{main}$. It follows that $\Delta=\Sigma_{main}=\Sigma$.
\end{proof}

\begin{lem}\label{lem:sing} 
Assume that $q\ge 4$ is even and let  $[s]\in \Sigma$  be such that:
\begin{enumerate}[(a)]
\item the map $\cup s\colon H^1(\OO_X)\to H^1(K_X)$ has rank $q-2$;
\item  if $\ker \cup s=<\bar \al,\bar \beta>$, with $\al,\beta\in H^0(\Omega^1_X)$, then  $\al\wedge \beta\ne 0$.
\end{enumerate}
Then the hypersurface $Z$ defined by  $\Pf$ (cf. Lemma \ref{lem:qeven-odd}) is smooth at $[s]$.
\end{lem}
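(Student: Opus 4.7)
The plan is to compute the differential $d\Pf$ at a representative $s \in H^0(K_X)$ and verify that it is a nonzero linear form, which will show that $Z \subset |K_X|$ is smooth at $[s]$. By construction $\Pf = \widehat{\Pf} \circ c$, where $c \colon H^0(K_X) \to \bigwedge^2 H^1(\OO_X)^{\vee}$ is the linear map sending $s$ to the skew form $c_s(v,w) := \int_X s \cup v \cup w$, and $\widehat{\Pf}$ is the Pfaffian polynomial on skew forms on the $q$-dimensional space $H^1(\OO_X)$.

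First I will invoke the classical description of the Pfaffian hypersurface $\{\widehat{\Pf} = 0\}$ of degenerate skew forms: it is smooth precisely at skew forms of corank $2$, and if $c_0$ has kernel spanned by $\bar\alpha, \bar\beta$, a cofactor computation for Pfaffians produces a scalar $\lambda$ (the Pfaffian of the restriction of $c_0$ to any complement of its kernel, a nondegenerate skew form of size $q-2$, hence $\lambda \ne 0$) such that
\[ d\widehat{\Pf}_{c_0}(\delta c) \;=\; \lambda\, \delta c(\bar\alpha, \bar\beta) \qquad \text{for every } \delta c. \]
Combining this with the chain rule and the linearity of $c$, hypothesis (a) yields
\[ d\Pf_s(s') \;=\; \lambda\, c_{s'}(\bar\alpha, \bar\beta) \;=\; \lambda \int_X s' \cup \bar\alpha \cup \bar\beta \qquad \text{for all } s' \in H^0(K_X). \]

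It remains to find some $s'$ making this integral nonzero; by Serre duality this is equivalent to $\bar\alpha \cup \bar\beta \ne 0$ in $H^2(\OO_X)$. Here hypothesis (b) enters: under the Hodge decomposition, the anti-linear conjugation $H^{2,0}(X) \to H^{0,2}(X)$ carries $\alpha \wedge \beta$ to $\bar\alpha \cup \bar\beta$, so $\alpha \wedge \beta \ne 0$ forces $\bar\alpha \cup \bar\beta \ne 0$. Hence $d\Pf_s$ is a nonzero linear form on $H^0(K_X)$, and $Z$ is smooth at $[s]$.

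The only nonroutine ingredient is the explicit shape of $d\widehat{\Pf}$ at a corank-$2$ skew form; this is a standard Pfaffian cofactor identity, and the main thing to record is that the scalar $\lambda$ does not vanish, which is automatic once the corank equals exactly two. Everything else is a short transcription via Serre duality and complex conjugation in Hodge theory.
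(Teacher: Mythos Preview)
Your proof is correct. It differs from the paper's in packaging rather than in substance, but the packaging is worth noting.

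The paper argues by restriction to a single line: it takes the line in $|K_X|$ joining $[s]$ to $[w]$ with $w=\alpha\wedge\beta$, writes the pencil of skew forms $\lambda A+\mu B$ in an explicit block basis adapted to $\ker(\cup s)$, and checks by hand that the coefficient of $\lambda^{q-2}\mu^2$ in $\det(\lambda A+\mu B)$ is the nonzero $2\times 2$ determinant coming from $c_w(\bar\alpha,\bar\beta)$. Since the determinant is the square of the Pfaffian, this forces the restricted Pfaffian to vanish simply at $\mu=0$, hence $Z$ is smooth at $[s]$.

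You instead compute the full differential $d\Pf_s$ via the Pfaffian cofactor identity and identify it, up to the nonzero scalar $\lambda=\Pf$ of the nondegenerate $(q-2)\times(q-2)$ block, with the linear form $s'\mapsto \int_X s'\cup\bar\alpha\cup\bar\beta$; Serre duality plus Hodge conjugation then reduce nonvanishing to hypothesis~(b). Evaluating your formula at $s'=\alpha\wedge\beta$ recovers exactly the paper's test direction (the integral becomes $\|\alpha\wedge\beta\|^2>0$), so the two arguments meet at the same point. Your version has the advantage of exhibiting the tangent hyperplane to $Z$ at $[s]$ intrinsically, while the paper's matrix computation is more self-contained in that it does not appeal to the Pfaffian cofactor expansion as a black box.
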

\begin{proof}  
Set $w=\alpha\wedge \beta$; 
we are going to show that the line  $M\subset |K_X|$ that joins $[s]$ and $[w]$ intersects  $Z$  with multiplicity 1 at $[s]$. 
We  regard $c_s$ and $c_w$ as alternating forms on $H^1(\OO_X)$. Let $J:=\left(\begin{array}{cc} 0&1\\-1&0\end{array}\right)$;  it is possible to complete $\alpha, \beta$ to a basis of $H^1(\OO_X)$ in such a way that   the matrices $A$, $B$ associated to $c_s$, $c_w$  have the form:
$$A=\left(\begin{array}{ccccc} 0& 0 &\dots&\dots & 0\\
0& J & 0& \dots & 0\\
\dots&\dots &\dots &\dots &\dots\\
0&\dots& 0&J&0\\
0& \dots&\dots &\dots& J\end{array}\right)\qquad B=\left(\begin{array}{c|c}C&-\,^t\!N \\  \hline \\  N & M\end{array}\right),$$
where   $M$ is a $(q-2)\times (q-2)$ antisymmetric matrix and  $C$ is a $2\times 2$ antisymmetric matrix. Notice  that $\det C\ne 0$ by the condition that $\alpha\wedge \beta \ne 0$. 
 Set $D(\lambda, \mu):=\det(\lambda A+\mu B)$;
then it is easy to see that the coefficient in $D(\lambda,\mu)$ of the monomial $\lambda^{q-2}\mu^2$ is equal to $\det C\ne 0$. Hence $\mu=0$ is a double root of $D(\lambda,\mu )$. Since  $D(\lambda,\mu)$ is the square of the Pfaffian $\Pf(\lambda,\mu)$ of $\lambda A+\mu B$, it follows that $\mu=0$ is a simple root  of $\Pf(\lambda,\mu)$. So the intersection multiplicity of $M$ and $Z$ at $[s]$ is equal to 1, and thus  $Z$ is smooth at  $[s]$.
\end{proof}
\begin{proof}[Conclusion of  the proof of Thm. \ref{thm:para2}]
(i) By Lemma \ref{lem:qeven-odd}, we have $\Sigma_{main}=|K_X|$, hence $|K_X|\subset \cP_{main}$ by Lemma \ref{lem:key}.
\medskip

(ii) 
If $q$ is even, then we have $\Sigma=\Sigma_{main}$ by  Lemma \ref{lem:qeven-odd}, and therefore $|K_X|\cap\cP_{main}=|K_X|\cap \overline{( \cP\setminus |K_X|)}=\Sigma$ by Lemma \ref{lem:key}.  By Lemma \ref{lem:qeven-odd}, the closed set  $\Sigma$ is irreducible and it is the zero locus of a polynomial $\Pf$ of degree $q/2$. In particular, if $q=2$ then $\Sigma$ is a hyperplane, so  we may assume  from now on that $q\ge 4$.  Counting dimensions, one sees that if $[s]\in \Sigma$ is general, then $\pr_2\inv([s])$ has dimension 1. This is equivalent to the fact that the  linear map  $\cup s$ has rank $q-2$. In addition, the kernel of $\cup s$ contains a general $\bar \alpha\in H^1(\OO_X)$, hence $s$ 
satisfies the assumptions of Lemma \ref{lem:sing}, and so $[s]$ is a smooth point of the hypersurface $Z$ defined by  $\Pf$. Thus $\Sigma=Z$ is a reduced and irreducible hypersurface of degree $q/2$. 
Finally, if $X$ has no irrational pencil of genus $>1$, then every $s\in H^0(K_X)$ such that $\cup s$ has rank $q-2$ satisfies the assumptions of Lemma \ref{lem:sing}, hence it corresponds to a smooth point of $\Sigma$. Conversely, an argument similar to the proof of Lemma \ref{lem:sing} shows that if $\cup s$ has rank $<q-2$, then $[s]\in \Sigma$ is singular.
\end{proof}
\section{Examples and open questions}\label{sec:examples}

The first two examples here show that the assumptions that $q(X)\ge \dim X+1$ and that $X$ has no fibration of Albanese general type cannot be removed from Theorem \ref{thm:para3}; the third one shows that when $q(X)=\dim X+1$ the canonical system $|K_X|$ may or may not be  exorbitant.
\begin{ex} \label{ex:q=n}
\underline{Varieties with $q(X)=\dim X$ and $p_g(X)<\chi(K_X)+q-1$.}\\
Our starting point is the  example, constructed in \cite[\S 4]{chen-hacon},  of a threefold $Y$ of general type and maximal Albanese dimension with $p_g(Y)=1$, $q=3$ and $\chi(K_Y)=0$, so that $p_g(Y)-(\chi(K_Y))+q(Y)-1)=-1<0$. 

Now let $H\in \Pic(Y)$ be very ample, let $D\in |2H|$ be a smooth divisor and let $p\colon X\to  Y$ be the double cover given by the relation $2H\equiv D$.
The variety $X$ is a smooth threefold of general type and of maximal Albanese dimension. The usual formulae for double covers give:
\begin{gather*}
\chi(K_X)=\chi(K_Y)+\chi(K_Y+H)=\chi(K_Y+H),\\  q(X)=q(Y)+h^1(-H)=3+h^1(-H),\\
p_g(X)=p_g(Y)+h^0(K_Y+H)=1+h^0(K_Y+H)
\end{gather*}
Since $h^1(-H)=0$ and $\chi(K_Y+H)=h^0(K_Y+H)$ by Kodaira vanishing, we get $q(X)=3$ and $p_g(X)-(\chi(K_X))+q(X)-1)=-1$. By taking $H$ a multiple of a fixed very ample $L\in \Pic(Y)$, one obtains  examples with $\chi(K_X)$ arbitrarily large.
\medskip

In addition, in \cite{chen-hacon} $Y$ is constructed as a desingularization of a certain $\Z_2^2$-cover of a product of three elliptic curves and it is easy to see that replacing the elliptic curves by abelian varieties one can obtain examples of varieties $Y$ of any dimension $\ge 3$ that are of general type and maximal Albanese dimension and have  $p_g(Y)=1$, $\chi(K_Y)=0$  and $q(Y)=\dim Y$.  Taking a double cover $X\to Y$ as above,  one obtains examples with $\chi(K_X)$ arbitrarily large and $p_g(X)-(\chi(K_X)+q(X)-1)=2-\dim X<0$. 
\end{ex} 
\begin{ex}\underline{Varieties with Albanese general type fibrations.}\\
As in Example \ref{ex:q=n}, let $Y$ be a variety of general type and maximal Albanese dimension with $q(Y)=\dim Y=:n\ge 3$, $p_g(Y)=1$ and $\chi(K_Y)=0$. Let $C$ be a curve of genus 2 and let $Z:=Y\times C$; the variety $Z$ is of general type and of Albanese general type, with $\dim Z=n+1$, $q(Z)=n+2$, $p_g(Z)=2$, hence $p_g(Z)-(\chi(K_Z)+q(Z)-1)=1-n<0$. As in Example \ref{ex:q=n}, by taking a double cover $X\to Z$ branched on a smooth very ample divisor, one obtains examples with $p_g(X)-(\chi(K_X)+q(X)-1)=1-n<0$.

Notice that all these examples have an irrational pencil of genus 2 induced by the second projection $X\to C$. 

\end{ex}
\begin{ex} \label{ex:q=n+1}
\underline{Varieties with $q(X)=\dim X+1$.}\\ Let $A$ be an abelian variety of dimension  $q$ and let $X\subset A$ be a smooth ample divisor. Using the adjunction formula and Kodaira vanishing,  one sees immediately that $q(X)=q=\dim X+1$ and $p_g(X)=\chi(K_X)+q-1$, hence $|K_X|$ is not exorbitant by Theorem \ref{thm:para3}.  By \cite[\S 3]{theta}, the same is true also when  $X$ is a desingularization of an irreducible theta divisor in a principally polarized abelian variety. In addition, standard   computations show that by taking a smooth double cover $X\to Z$ with ample branch locus where  is  $Z$ a variety such that $p_g(Z)=\chi(K_Z)+q-1$ and $q(Z)=\dim Z +1$, then the Albanese map of $X$ factorizes through the double cover $X\to Z$ and $p_g(X)=\chi(K_X)+q-1$, $q(X)=\dim X+1$. Iterating this construction,  one gets examples with $p_g=\chi+q-1$ and Albanese map of arbitrarily high degree.
\medskip

Next we describe some examples of $n$-dimensional varieties with $q=n+1$ for which  the difference $p_g(X)-(\chi(K_X)+q-1)$ can be arbitrarily large.
Let $D$ be a smooth ample divisor in an abelian variety $A$ of dimension $q:=n+1$, $n\ge 3$,  let $Y\subset \pp^n$  be a smooth hypersurface of degree $d$ and let $X\subset D\times Y$ be an $n$-dimensional general complete intersection of  very ample divisors. Since the morphism $X\to D$ is generically finite, the variety $X$ has maximal Albanese dimension; in addition, by  the Lefschetz Theorem we have $h^i(\OO_X)= h^i(\OO_{D\times Y})$ for $i<n$. 
Using K\"unneth formula, we get: 
$$h^i(\OO_{D\times Y})=h^i(\OO_D)= {q\choose i}, \  \  i\le n-2;\quad h^{n-1}(\OO_{D\times Y})=h^{n-1}(\OO_D)+h^{n-1}(\OO_Y).$$
In particular, we have $q(X)=q=n+1$. 
It follows:
$$p_g(X)=\chi(K_X)-\sum_{i=1}^{n}(-1)^{i}h^{n-i}(\OO_X)=\chi(K_X)+ q-1+h^{n-1}(\OO_Y).$$
Hence, by taking $d>>0$ we can make $p_g(X)-(\chi(K_X)+q-1)=p_g(Y)$ arbitrarily large.
\end{ex}

We finish by posing a couple of questions:

\begin{qst}
The examples with $q(X)=\dim X +1$ and $p_g-(\chi+q-1)>0$  described in Example \ref{ex:q=n+1} have non birational Albanese map. Are  there  similar examples with birational Albanese map?
\end{qst} 
\begin{qst} Can one describe $|K_X|\cap \cP_{main}$ also  for varieties of dimension $>2$?  For instance, if one could show that $\cP_{main}\cap |K_X|$ is a non degenerate subvariety of  $|K_X|$ then Corollary \ref{cor:base2} would extend to higher dimension. 
\end{qst}

\bigskip

\begin{minipage}{13.0cm}
\parbox[t]{6.5cm}{Margarida Mendes Lopes\\
Departamento de  Matem\'atica\\
Instituto Superior T\'ecnico\\
Universidade T{\'e}cnica de Lisboa\\
Av.~Rovisco Pais\\
1049-001 Lisboa, PORTUGAL\\
mmlopes@math.ist.utl.pt
 } \hfill
\parbox[t]{5.5cm}{Rita Pardini\\
Dipartimento di Matematica\\
Universit\`a di Pisa\\
Largo B. Pontecorvo, 5\\
56127 Pisa, Italy\\
pardini@dm.unipi.it}

\vskip1.0truecm

\parbox[t]{5.5cm}{Gian Pietro Pirola\\
Dipartimento di Matematica\\
Universit\`a di Pavia\\
Via Ferrata, 1 \\
 27100 Pavia, Italy\\
\email{gianpietro.pirola@unipv.it}}
\end{minipage}


\begin{thebibliography}{ABCD}
  \bibitem[Ar]{artin} M.~Artin, {\em On the solutions of analytic equations},   Invent. math. (1966),  277--291. 
  
\bibitem [Be3]{beauville-annulation}  A. Beauville, {\em Annulation du $H^1$ et syst\`emes paracanoniques sur les surfaces}, J. reine angew. Math {\bf 388} (1988), 149--157.
\bibitem [Be4]{beauville-errata}  A. Beauville, Corrections to \cite{beauville-annulation}, J. reine angew. Math {\bf 418} (1991), 219--220.
\bibitem[Ca]{catanese-irregular} F.~Catanese, {\em Moduli and classification of irregular Kaehler manifolds (and algebraic varieties) with Albanese general type fibrations}, Invent. math. {\bf 104} (1991), 263--289.
\bibitem[CH]{chen-hacon}  J.~Chen, C.~Hacon, {\em On the irregularity of the image of the Iitaka fibration}, Comm. Algebra {\bf 32} (2004), 203--215.
\bibitem[EL]{theta} L. Ein, R. Lazarsfeld,
 {\em Singularities of theta divisors, and birational geometry of irregular varieties},  J. Amer. 
Math. Soc. {\bf 10} (1) (1997),
 243--258.
 
\bibitem [GL1]{GL1}
M.~Green, R.~Lazarsfeld, 
{\em Deformation theory, generic vanishing theorems and some
conjectures of Enriques, Catanese and Beauville}, Invent. math. {\bf 90} (1987), 389--407.

\bibitem [GL2]{GL2}
M.~Green, R.~Lazarsfeld, 
{\em Higher obstructions to deforming cohomology groups of line bundles}, J. Amer. 
Math. Soc. {\bf 4} (1991), 87--103.


\bibitem[LP]{bgg} R.~Lazarsfeld, M.Popa,  {\em Derivative complex, BGG correspondence, and numerical inequalities for compact K\"ahler manifolds}, 	Invent. math. {\bf 182} (2010), 605-633.
\bibitem[MPP]{BN} M. Mendes Lopes, R. Pardini and G.P. Pirola, {\em  Brill-Noether loci for divisors on irregular varieties}. arXiv:1112.6357 
\bibitem[PP]{PareschiPopa} G.~Pareschi, M.~ Popa, {\em Strong generic vanishing and a higher dimensional Castelnuovo-de Franchis inequality},    Duke Math. J.  {\bf 150},  2 (2009), 269-28.
\bibitem[Ra]{ran} Z.~Ran, {\em On subvarieties of abelian varieties}, Invent. math. {\bf 62} (1981), 459--479.
\bibitem[Si]{simpson}
C.~Simpson, 
{\em Subspaces of moduli spaces of rank one local systems},
Ann. Sci. \'Ecole Norm. Sup. (4) {\bf 26} (1993), 361--401.
\end{thebibliography}
\end{document}